\documentclass[12pt]{amsart}
\setlength{\textwidth}{6.0in} \setlength{\textheight}{9.0in}

\usepackage{amscd}
\usepackage{amssymb,amsmath,amsthm}
\theoremstyle{plain}
\newtheorem{thm}{Theorem}[section]
\newtheorem{lem}[thm]{Lemma}
\newtheorem{cor}[thm]{Corollary}
\newtheorem{defn-lem}[thm]{Definition-Lemma}

\newtheorem{prop}[thm]{Proposition}

\theoremstyle{definition}
\newtheorem{defn}[thm]{Definition}

\newtheorem{rem}[thm]{Remark}




\def\md #1#2#3#4#5 {\left(
                        \begin{matrix}
             #1 & #2 \\
             #3 & #4
                        \end{matrix}
                      \right)- #5}

\def\ma #1#2#3#4 {\left(
                        \begin{matrix}
             #1 & #2 \\
             #3 & #4
                        \end{matrix}
                      \right)}
\def \mu  {\mathcal{M}(C(X)\otimes B)}
\def \mt  {\mathcal{M}(B)}
\def\Ind{\operatorname{Ind}}
\def\Im{\operatorname{Im}}

\def \Ker {\operatorname{Ker}}

\def\KK{\operatorname{KK}}
\newcommand{\mc}{\mathcal}
\begin{document}
\title [Homotopy classification of projections in the corona algebra]
       {Homotopy classification of homogeneous projections in the corona algebra of a non-simple $C\sp*$-algebra}

\begin{abstract}
In this paper  we consider certain proejctions in the corona algebra of  $C(X)\otimes B$ associated to $(p_0, p_1, \dots, p_n)$ where $p_i: X_i \to \mt_s$ a continuous projection valued section to the multiplier algebra of a stable $C\sp*$-algebra $B$ for each i. Here $X_i$'s are closed intervals given by a partition $\{x_1, \dots, x_n\}$ on the interior of $X$ and adjacent sections differ by compacts at each partition point.  Assuming a kind of homogeneity on the projection we characterize  when two such projections are homotopy equivalent.
\end{abstract}

\author { Hyun Ho \quad Lee }

\address {Department of Mathematics\\
          University of Ulsan\\
         Ulsan, South Korea 680-749 }
\email{hadamard@ulsan.ac.kr}

\keywords{KK-theory, Homotopy equivalence, Generalized Fredholm index, Generalized essential codimension}

\subjclass[2000]{Primary:46L35. Secondary:47C15} 
\date{}
\thanks{}
\maketitle

\section{Introduction}

In this article, we study the projections in the corona algebra of $C(X)\otimes B$ where $X$ is a locally compact, Hausdorff space of finite (covering) dimension and $B$ a simple, stable $C\sp*$-algebra such that the multiplier algebra $\mt$ is of real rank zero.  In \cite{BL}, for the case that $X$ are one dimensional connected spaces and $B=K$ the algebra of compact operators on a separable Hilbert space we gave  necessary and sufficient conditions for two projections in the corona algebra to be Muarry-von Neumann equivalent, unitarily equivalent, homotopy equivalent. The works to generalize and extend the results of \cite{BL} were carried out in \cite{Lee}, \cite{Lee2}; BDF's classical definition was extended to an arbitrary $C\sp*$-algebra and  the important question about lifting a projection from the corona algebra was solved for $X$ one dimensional compact space. However, the equivalence relations of two projections have been remaind unsolved.  Let us briefly explain why this problem is interesting; if we consider the extensions defined by two projections $\mathbf{p},\mathbf{q}$, say $\tau_{\mathbf{p}}, \tau_{\mathbf{q}}$ respectively, the homotopy(unitary) equivalence of $\mathbf{p}, \mathbf{q}$ implies the strong(weak) unitary equivalence of $\tau_{\mathbf{p}}, \tau_{\mathbf{q}}$.  Thus  our aim of this article is to obtain charaterizations of fundamental equivalence relations of two (homogeneous) projections in the corona algebra of $C(X)\otimes B$.         

From now on $X$ is $[0,1],(-\infty,\infty),[0,\infty)$ or the circle $[0,1]/\{0,1\}$. 
When $X$ is compact, let $I=C(X)\otimes B$ which is the $C\sp{*}$-algebra of (norm continuous) functions from $X$ to $B$. When $X$ is not compact,  let $I=C_0(X)\otimes B$ which is the $C\sp*$-algebra of continuous functions from $X$ to $B$ vanishing at infinity. Then $\mu$ is $C_b(X, M(B)_s)$, which is the space of
   bounded functions from $X$ to $\mt$, where $\mt$ is given the strict
   topology \cite{ATP}. Let $\mathcal{Q}(I)=\mathcal{M}(I)/I$ be the corona algebra of $I$ and also let $\pi:\mathcal{M}(I) \to \mathcal{Q}(I)$ be the natural quotient map. Then an element
   $\mathbf{f}$ of the corona algebra can be represented as follows:  Consider a
 finite partition of $X$, or $X \smallsetminus \{0,1\}$ when $X=\mathbb{T}$ given by partition points $x_1 < x_2 < \cdots
 < x_n $ all of which are in the interior of $X$ and divide $X$ into
 $n+1$ (closed) subintervals $X_0,X_1,\cdots,X_{n}$. We can take $f_i \in
 C_b(X_i, M(B)_s)$ such that $f_i(x_i) -f_{i-1}(x_i)\in B$
 for $i=1,2,\cdots,n$ and $f_0(x_0)-f_n(x_0) \in B$  where $x_0=0=1$ if $X$ is the circle.
 \begin{lem}\cite[Lemma 3.1]{Lee}
 The coset in $\mathcal{Q}(I)$ represented by
 $(f_0,\cdots,f_n)$  consists of functions $f$ in $M(I)$ such that $f- f_i \in
 C(X_i)\otimes B$ for every $i$ and $f-f_i $ vanishes (in norm) at any
 infinite end point of $X_i$.
 \end{lem}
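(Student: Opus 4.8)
The plan is to realise the tuple $(f_0,\dots,f_n)$ by an honest element $f\in M(I)=C_b(X,M(B)_s)$ obtained by gluing the given sections across the partition points, and then to identify the coset $\pi(f)$ by a patching argument over the finite closed cover $\{X_0,\dots,X_n\}$ of $X$. The only structural input needed is the identification $M(C_0(X)\otimes B)\cong C_b(X,M(B)_s)$ (with the strict topology on $M(B)$) of \cite{ATP}; the finiteness of $\dim X$, the simplicity of $B$, and real rank zero of $M(B)$ play no role in this lemma.

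For the gluing, put $b_i:=f_i(x_i)-f_{i-1}(x_i)\in B$ at each interior partition point $x_i$, choose disjoint closed neighbourhoods $[x_i-\varepsilon_i,x_i]\subset X_{i-1}$ containing no other partition point, and pick continuous $\phi_i\colon X\to[0,1]$ supported in $[x_i-\varepsilon_i,x_i]$ with $\phi_i(x_i)=1$. Define $f$ on $X_{i-1}$ by $f:=f_{i-1}+\phi_i\,b_i$ (so that outside the neighbourhood $f=f_{i-1}$, while $f(x_i)=f_{i-1}(x_i)+b_i=f_i(x_i)$), and on $X_n$ by $f:=f_n$; in the circle case one inserts one more such correction near $x_0$ inside $X_n$ using $b_0:=f_0(x_0)-f_n(x_0)$. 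The pieces agree at every partition point, so $f$ is a well-defined function $X\to M(B)$; it is bounded since $\|f\|\le\sup_i\|f_i\|+\sup_i\|b_i\|$, and it is strictly continuous since each correction $\phi_i b_i$ is $B$-valued and norm-continuous, hence strictly continuous, while the $f_i$ are strictly continuous. Thus $f\in C_b(X,M(B)_s)=M(I)$ by \cite{ATP}. Moreover, by construction $f|_{X_i}-f_i\in C(X_i)\otimes B$ for every $i$, and this difference, being compactly supported away from the ends of $X_i$, vanishes in norm at any infinite endpoint of $X_i$; we then take $\pi(f)$ to be the coset represented by $(f_0,\dots,f_n)$.

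It remains to check that a general $g\in M(I)$ lies in $\pi(f)$ exactly when $g|_{X_i}-f_i\in C(X_i)\otimes B$ for every $i$ with the stated vanishing. Both implications come from the identity $g|_{X_i}-f_i=(g-f)|_{X_i}+(f|_{X_i}-f_i)$. If $g-f\in I=C_0(X)\otimes B=C_0(X,B)$ (with $C_0$ replaced by $C$ when $X$ is compact), then its restriction to each $X_i$ is a norm-continuous $B$-valued function vanishing at infinite endpoints, and adding the already-established contribution of $f|_{X_i}-f_i$ yields the asserted form of $g|_{X_i}-f_i$. Conversely, if $g|_{X_i}-f_i$ has that form for every $i$, then $g-f$ restricted to each $X_i$ is norm-continuous, $B$-valued, and vanishes at infinite endpoints; since the $X_i$ form a finite closed cover of $X$ overlapping only in the partition points, norm-continuity of $g-f$ is inherited globally, and every infinite end of $X$ is an infinite end of some $X_i$, so $g-f\in C_0(X,B)=I$ and $\pi(g)=\pi(f)$.

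The substantive step is the gluing — the observation that sections differing only by an element of $B$ at a partition point can be patched into a single multiplier — together with the bookkeeping that keeps the glued function bounded and strictly continuous; everything else is unwinding the identifications $M(C_0(X)\otimes B)=C_b(X,M(B)_s)$ and $C_0(X)\otimes B=C_0(X,B)$ and tracking the behaviour of restriction to the (possibly unbounded) subintervals $X_i$. I do not anticipate a serious obstacle beyond careful handling of the infinite endpoints.
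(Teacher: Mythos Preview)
Your argument is correct. Note, however, that the present paper does not actually prove this lemma: it is quoted from \cite[Lemma~3.1]{Lee} without proof, so there is no ``paper's own proof'' of the full statement to compare against. The closest the paper comes is the unnumbered lemma in Section~3 (just before Proposition~\ref{P:homotopy}), which carries out the construction of a single representative $\widetilde f\in\mu$ from the tuple $(f_0,\dots,f_n)$. There the correction at each partition point $x_i$ is the piecewise-linear ``tent'' function $m_i$ supported on $[x_{i-1},x_{i+1}]$ with value $f_i(x_i)-f_{i-1}(x_i)$ at $x_i$, and one sets $\widetilde f=f_i-m_i/2+m_{i+1}/2$ on $X_i$; the symmetric splitting makes the pieces match at every $x_i$. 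Your one-sided bump functions $\phi_i$ supported in $[x_i-\varepsilon_i,x_i]\subset X_{i-1}$ achieve the same effect more simply (all of the jump is absorbed on the left), at the cost of making the choice of $\varepsilon_i$ explicit. Either construction yields $f|_{X_i}-f_i\in C(X_i)\otimes B$ with compact support away from infinite ends, which is all that is needed.

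The second half of your argument---identifying the full coset $\pi(f)$ via the identity $g|_{X_i}-f_i=(g-f)|_{X_i}+(f|_{X_i}-f_i)$ and the finite closed cover $\{X_i\}$---is not addressed in this paper at all (being part of what is imported from \cite{Lee}), and your treatment of it is clean and complete.
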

 Thus $(f_0,\cdots,f_n)$ and $(g_0,\cdots,g_n)$
 define the same element of $\mathcal{C}(I)$ if and only if $f_i - g_i \in
 C(X_i)\otimes B$ for $i=0,\cdots,n$ if $X$ is compact.  $(f_0,\cdots,f_n)$ and $(g_0,\cdots,g_n)$
 define the same element of $\mathcal{C}(I)$ if and only if  $f_i - g_i \in
 C(X_i)\otimes B$ for $i=0,\cdots,n-1$, $f_n -g_n \in
 C_0([x_n,\infty))\otimes B$  if $X$ is $[0.\infty)$.  $(f_0,\cdots,f_n)$ and $(g_0,\cdots,g_n)$
 define the same element of $\mathcal{C}(I)$ if and only if
  $f_i - g_i \in
 C(X_i)\otimes B$ for $i=1,\cdots,n-1$, $f_n -g_n \in
 C_0([x_n,\infty))\otimes B$, $f_0-g_0 \in
 C_0((-\infty,x_1])\otimes B$ if $X=(-\infty,\infty)$.\\
 
 The following theorem shows why the above description of the element in the corona algebra has an advantage; the projections in the corona algebra are ``locally liftable''.
\begin{thm}\cite[Theorem 3.2]{Lee}\label{T:locallift}
Let $I$ be $C(X)\otimes B$ or $C_0(X)\otimes B$ where $B$ is a stable $C\sp{*}$-algebra such that $\mt$  has real rank zero.
Then a projection $\mathbf{p}$ in $\mc{Q}(I)$ can be represented by $(p_0,p_1,\cdots, p_n)$  under a suitable partition $\{x_1, \dots, x_n\}$ of $X$ such that $p_i$ is a projection valued function in $C(X_i)\otimes \mt_s$ and $p_i(x_i)-p_{i-1}(x_i) \in B$ for each $i=1,\dots, n$. We call $(p_0,p_1,\cdots, p_n)$ the local representation or lifting for $\mathbf{p}$.
\end{thm}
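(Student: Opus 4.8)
The plan is to produce the local representation by combining two complementary devices: continuous functional calculus on the part of $X$ where $\mathbf{p}$ is ``almost a projection'', and the real rank zero hypothesis on $\mt$ on the part where it is not, the passage from local to finite being handled by a compactness argument. As a first step, lift $\mathbf{p}$ to a self-adjoint $a\in \mathcal M(I)=C_b(X,\mt_s)$ (possible since $\pi$ is surjective), and, replacing $a$ by $h(a)$ with $h(t)=\max(0,\min(1,t))$, assume $0\le a\le 1$; then $a^2-a\in I$. Since $I=C_0(X)\otimes B$ consists of norm-continuous $B$-valued functions vanishing at the infinite ends of $X$, the function $g(x):=\|a(x)^2-a(x)\|$ is continuous, takes values in $[0,1/4]$, and tends to $0$ at those ends. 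Fix a small $\eta>0$ and put $U:=\{x:g(x)<\tfrac14-\eta\}$; then $K:=X\setminus U$ is closed and, because $g$ vanishes at infinity, compact.

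Next I would treat $U$ by functional calculus. For $x\in U$ one has $\sigma(a(x))\subseteq[0,\tfrac12-\sqrt\eta)\cup(\tfrac12+\sqrt\eta,1]$, so, choosing a continuous $k$ on $[0,1]$ that is $0$ on $[0,\tfrac12-\sqrt\eta]$ and $1$ on $[\tfrac12+\sqrt\eta,1]$, the section $p_U:=k(a)|_U$ is strictly continuous (functional calculus is strictly continuous on bounded sets) and projection-valued, since $k$ takes only the values $0$ and $1$ on each $\sigma(a(x))$. Moreover $k-\id$ vanishes at $\{0,1\}$, hence is a uniform limit of functions $(t^2-t)r_m(t)$, so $p_U-a|_U=(k-\id)(a)|_U=\lim_m(a^2-a)\,r_m(a)|_U$ lies in $C_0(U)\otimes B$, and it is dominated there by $g$, hence vanishes at the infinite ends.

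The heart of the matter is constructing a local projection-valued lift near each point of $K$, where $a$ has no spectral gap at $1/2$. Fix $x_0\in K$. Since $a(x_0)^2-a(x_0)\in B$ and the continuous function $f_1$ that is $1$ on $[1/3,2/3]$ and $0$ off $(1/4,3/4)$ vanishes at $\{0,1\}$, the same limit argument gives $f_1(a(x_0))\in B$, whence $\chi_S(a(x_0))=\chi_S(a(x_0))\,f_1(a(x_0))\in B$ for every Borel $S\subseteq[1/3,2/3]$. Using this together with the real rank zero of $\mt$ — equivalently of the ideal $B$ and of its hereditary subalgebras — I would show that over a sufficiently short closed subinterval $V_{x_0}\ni x_0$, with endpoints chosen in $U$ so that they can be matched to $p_U$, one can perturb $a|_{V_{x_0}}$ by an element of $C(V_{x_0})\otimes B$ to a projection-valued section $q_{x_0}\in C(V_{x_0})\otimes\mt_s$. \textbf{The serious difficulty, which I expect to be the main obstacle}, is keeping this $B$-valued correction genuinely norm-continuous: because $a$ is only strictly continuous one cannot transport a single spectral projection along $V_{x_0}$, so instead one builds $q_{x_0}$ inductively over a fine subdivision of $V_{x_0}$, at each step clearing the part of the spectrum near $1/2$ by spectral projections that lie in $B$ and correcting by small elements of $B$ furnished by real rank zero, matching consecutive pieces at the subdivision points.

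Finally I would assemble a finite partition. By compactness, finitely many of the intervals $V_{x_0}$ cover $K$; together with $U$ they form a finite cover of $X$, each member carrying a projection-valued section agreeing with $a$ modulo $C\otimes B$ and vanishing at the infinite ends. Choose interior points $x_1<\dots<x_n$ so that each closed subinterval $X_i$ lies in one member of this cover — only finitely many are needed, since the infinite ends lie in $U$ — and let $p_i\in C(X_i)\otimes\mt_s$ be the restriction to $X_i$ of the corresponding section. Then each $p_i$ is projection-valued and $a|_{X_i}-p_i\in C(X_i)\otimes B$; evaluating at $x_i$ shows $p_i(x_i)-p_{i-1}(x_i)\in B$, and the differences vanish at the infinite ends. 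By Lemma 3.1 the single lift $a$ witnesses that $(p_0,\dots,p_n)$ represents $\mathbf{p}$, which is the asserted local representation.
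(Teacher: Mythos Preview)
The paper does not prove this theorem; it is quoted as \cite[Theorem 3.2]{Lee} and used as input, so there is no ``paper's own proof'' to compare against.

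Regarding your argument itself: the overall architecture---lift to a self-adjoint $a$ with $a^2-a\in I$, use functional calculus where $a$ already has a spectral gap, and use real rank zero near the bad set $K$, then assemble via compactness---is the right one, and your treatment of the set $U$ is correct. But the step on $K$, which you flag as the main obstacle, is left as a sketch, and the inductive subdivision you propose is more complicated than necessary. There is a direct route you are missing. At any $x_0$, the image of $a(x_0)$ in $\mathcal Q(B)=\mt/B$ is a projection; since $B$ is an ideal in the real-rank-zero algebra $\mt$, $B$ itself has real rank zero, hence an approximate unit of projections, and so this projection lifts to a projection $p_0\in\mt$. Set $b:=a(x_0)-p_0\in B$ and $c(x):=a(x)-b$. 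Then $c(x_0)=p_0$ and $c(x)^2-c(x)$ is a \emph{norm}-continuous $B$-valued function of $x$ (because $a^2-a\in I$ and products of a strictly continuous bounded family with a fixed element of $B$ are norm continuous), vanishing at $x_0$. Hence on a closed neighbourhood $V_{x_0}$ one has a uniform spectral gap for $c$, and exactly the same functional-calculus argument you used on $U$ applies to $c$ in place of $a$: $q_{x_0}:=k(c)$ is a strictly continuous projection on $V_{x_0}$, and $c-q_{x_0}=(c^2-c)\psi(c)$ lies in $C(V_{x_0})\otimes B$, whence $a-q_{x_0}=b+(c-q_{x_0})\in C(V_{x_0})\otimes B$ as well. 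No inductive matching is needed; the single constant correction $b$ does all the work, and the compatibility $p_i(x_i)-p_{i-1}(x_i)\in B$ at partition points then falls out automatically from both local lifts differing from $a$ by $B$-valued sections. In short, your proposal is on the right track but stalls precisely at the point where one more application of the same idea you already used on $U$---applied to $a-b$ rather than to $a$---closes the gap.
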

    
Recall that a closed submodule $E$ of the Hilbert module $F$ over $B$ is complementable if and only if there is a submodule $G$ orthogonal to $E$ such that $E\oplus G =F$. The Kasparov stabilization theorem says that a countably generated closed submodule of the standard Hilbert module $H_B$ is complementable whence it is the image of a projection in $\mathcal{L}(H_B)$ \cite{Kas80}. Let $\mathfrak{F}=((F_x)_{\{x\in X\}},\Gamma)$ be a continuous field of Hilbert modules over a locally compact Hausdorf space $X$ in the Dixmier-Duady sense.  A continuous field of Hilbert modules $((E_x)_{\{x\in X\}},\Gamma')$ is said to be complementable to $\mathfrak{F}$ when $E_x$ is a complementable submodule of $F_x$ for each $x \in X$. Then the following is the natural correspondence between the continuous field and its section map.  
    \begin{prop}\cite[Proposition 3.7]{Lee2}\label{P:bundlesection}
 A complementable subfield $((E_x)_{\{x\in X\}},\Gamma')$ of the constant module $((H_B)_{\{x\in X\}}, \Gamma)$, where $\Gamma$ consists of the (norm) continuous section from $X$ to $H_B$, is in one to one correspondence to a continuous projection-valued map $p: X \to \mathcal{L}(H_B)$, where the latter is equipped with the $*$-strong topology. (In general, we say that $\{T_i\}$ in $\mathcal{L}(X)$ converges to $T$ $*$-strongly if and only if both $T_i(x) \to T(x)$ and $T_i^*(x) \to T^*(x)$ in $X$ for all $x\in X$.)
 \end{prop}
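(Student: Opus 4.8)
The plan is to establish the fiberwise bijection first and then verify that it intertwines the two continuous structures. For each $x$, complementability of $E_{x}$ in $H_{B}$ gives $H_{B}=E_{x}\oplus E_{x}^{\perp}$, so the idempotent $p(x)$ onto $E_{x}$ along $E_{x}^{\perp}$ is adjointable and self-adjoint, and it is the unique projection in $\mathcal{L}(H_{B})$ with range $E_{x}$ (two projections with the same range coincide). Thus $x\mapsto p(x)$ is the candidate map; conversely a projection-valued $p$ produces the subfield $E_{x}=p(x)H_{B}$ with section space $\Gamma'=\{\xi\in\Gamma:\xi(x)\in E_{x}\ \forall x\}$. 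What remains to prove is exactly: (a) if $p$ is continuous for the $*$-strong topology then $((E_{x})_{x},\Gamma')$ is a complementable continuous subfield; and (b) if $((E_{x})_{x},\Gamma')$ is a complementable continuous subfield then $x\mapsto p(x)$ is $*$-strongly continuous. These are mutually inverse once one notes that the section space of any subfield of $((H_{B})_{x},\Gamma)$ is recovered from its fibers as $\{\xi\in\Gamma:\xi(x)\in E_{x}\ \forall x\}$ (an application of the local-uniform-closure axiom), so (a) and (b) suffice.

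For (a) I would start from the estimate, valid for $\xi\in\Gamma$ as $x\to x_{0}$,
\begin{align*}
\|p(x)\xi(x)-p(x_{0})\xi(x_{0})\|
&\le\|p(x)(\xi(x)-\xi(x_{0}))\|+\|(p(x)-p(x_{0}))\xi(x_{0})\|\\
&\le\|\xi(x)-\xi(x_{0})\|+\|(p(x)-p(x_{0}))\xi(x_{0})\|,
\end{align*}
whose right-hand side tends to $0$ by norm-continuity of $\xi$ and $*$-strong continuity of $p$. Hence every section $x\mapsto p(x)\xi(x)$ is norm continuous and the continuous-field axioms for $((E_{x})_{x},\Gamma')$ hold, the values $\{\xi(x):\xi\in\Gamma'\}$ being even all of $E_{x}$ via the sections $x\mapsto p(x)\eta$, $\eta\in H_{B}$. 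Since $1-p$ is again $*$-strongly continuous, the same argument shows $((E_{x}^{\perp})_{x})$ is a continuous subfield, and $E_{x}\oplus E_{x}^{\perp}=H_{B}$, so the subfield is complementable.

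For (b), self-adjointness of $p(x)$ reduces $*$-strong continuity to norm-continuity of $x\mapsto p(x)\eta$ for each $\eta\in H_{B}$, and I would reduce that in turn to the claim
\[
(\ast)\qquad\Gamma''=\{\zeta\in\Gamma:\zeta(x)\perp E_{x}\ \forall x\}\ \text{makes}\ ((E_{x}^{\perp})_{x})\ \text{a continuous subfield of}\ ((H_{B})_{x},\Gamma).
\]
Granting $(\ast)$: given $\eta$, $x_{0}$ and $\varepsilon>0$, pick $\xi\in\Gamma'$ with $\|\xi(x_{0})-p(x_{0})\eta\|<\varepsilon$ and $\zeta\in\Gamma''$ with $\|\zeta(x_{0})-(1-p(x_{0}))\eta\|<\varepsilon$, and set $\sigma=\xi+\zeta\in\Gamma$. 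Then $p(x)\sigma(x)=\xi(x)$ for all $x$, $\|\sigma(x_{0})-\eta\|<2\varepsilon$, and
\[
p(x)\eta-p(x_{0})\eta=p(x)\bigl(\eta-\sigma(x)\bigr)+\bigl(\xi(x)-\xi(x_{0})\bigr)+\bigl(\xi(x_{0})-p(x_{0})\eta\bigr),
\]
so $\limsup_{x\to x_{0}}\|p(x)\eta-p(x_{0})\eta\|\le 2\varepsilon+0+\varepsilon$; letting $\varepsilon\to 0$ completes (b).

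The hard part will be $(\ast)$: near every $x_{0}$ one must manufacture norm-continuous sections with values in the $E_{x}^{\perp}$ that approximate a prescribed vector of $E_{x_{0}}^{\perp}$, and here mere closedness of the fibers does not suffice---complementability is essential, and I expect the Kasparov stabilization theorem to do the work. Since $E_{x_{0}}\oplus E_{x_{0}}^{\perp}=H_{B}$ and $H_{B}\cong H_{B}\oplus H_{B}$, the fibers are absorbing, so over a small neighborhood of $x_{0}$ (using $\dim X<\infty$) I would build a norm-continuous unitary-valued section of $\mathcal{L}(H_{B})$ carrying each $E_{x}$ onto the fixed module $E_{x_{0}}$, after which the complementary sections are transports of constant sections of $E_{x_{0}}^{\perp}$. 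An equivalent route I would keep in reserve: assemble $\Gamma'$ into a Hilbert $C_{0}(X)\otimes B$-module, show (again via Kasparov stabilization applied fiberwise) that it is a complemented submodule of the standard module $C_{0}(X,H_{B})\cong H_{C_{0}(X)\otimes B}$, hence the range of a projection in $\mathcal{L}(H_{C_{0}(X)\otimes B})=\mathcal{M}(C_{0}(X)\otimes B)\cong C_{b}(X,\mathcal{L}(H_{B})_{s})$ by \cite{ATP}, and read off $p$ by restriction to fibers---either way the difficulty is concentrated in propagating complementability over $X$.
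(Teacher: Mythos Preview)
The paper does not prove this proposition here; it is quoted from \cite{Lee2} without argument, so there is nothing in the present paper to compare your attempt against.

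On the mathematics: your direction (a) is correct and complete. For (b) you have correctly isolated the difficulty in $(\ast)$, but your first proposed attack on it is shaky. A \emph{norm}-continuous unitary trivialization over a neighborhood would force $p$ itself to be norm-continuous near $x_{0}$, which is strictly stronger than the $*$-strong continuity you are after and is false in general; and a merely $*$-strongly continuous local trivialization is essentially equivalent to what you are trying to establish, so that route is circular. Your alternative route is the sound one and is almost certainly how \cite{Lee2} proceeds: regard $\Gamma'$ as a closed submodule of the Hilbert $C_{0}(X)\otimes B$-module $C_{0}(X,H_{B})\cong H_{C_{0}(X)\otimes B}$, show it is the range of an adjointable projection there, and then read off $p$ via $\mathcal{L}(H_{C_{0}(X)\otimes B})\cong C_{b}(X,\mathcal{L}(H_{B})_{s})$ from \cite{ATP}. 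The one step you still owe is why $\Gamma'$ is complemented in $C_{0}(X,H_{B})$ \emph{for the given inclusion}: Kasparov stabilization only supplies an abstract isomorphism $\Gamma'\oplus H_{C_{0}(X)\otimes B}\cong H_{C_{0}(X)\otimes B}$, not that the specific embedding admits an adjointable complement. That is precisely where the fiberwise complementability hypothesis has to enter, and it is the point your sketch leaves open.
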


 Combining  Proposition \ref{P:bundlesection} with Threorem \ref{T:locallift}, we can view a projection $\mathbf{p} \in \mathcal{Q}(I)$ as the complementable Hilbert module bundle by the local lifting  $(p_0,p_1, \dots, p_n )$. This point of view is a key to study a projection in the corona algebra of $C(X)\otimes B$ and deform it.   For instance, we are going to focus on more tractable projections, so called ``homogeneous'' ones. This means that the extension generated by the proejection $\mathbf{p}$ is homogeneous in the sense of Pimsner, Popa, Voiculescu \cite{PPV} (Pimsner, Popa, Voiculescu introduced this notion a quater centuary ago). Equally we prefer to say that $p_i(x)$'s are full and properly inifinite for all $i$ and $x\in X_i$ or the fibers are isomorphic to $H_B$. Thus we use this algebraic condition on fibers as our homogeneity condition on $\mathbf{p}$ from now on.\\

 Passing to the results of this article, we note that much of the material in this note is about adaptations of many techniques  from the compact case to more general $C\sp*$-algebras. The   key result is Proposition 3.6.

\section{Preliminaries; K-theoretic data associated with the bundle}
To help the reader we collect some results in this section. Thus this section  does not contain new results except Lemma \ref{L:BDF}.  To simplify the notation we assume that $B$ is stable. Accordingly we indentify the multiplier algebra $\mt$ with $\mathcal{L}(H_B)$ \cite{Kas80}.
Since we deal with the projection represneted by a local lifting $(p_0,\dots,p_n)$ whose discontinuity conditions are differences of two projections, it is desirable to quantify these discontinuities. The next notion provides the quantification of such discontinuities using K-theory. We refer the reader to \cite{Cu,Kas81} for rudiments of KK-theory.  
\begin{defn}\cite[Definition 2.1]{Lee2}\label{D:BDF}
Given two projections $p,q \in \mt$ such that $p-q \in B$, we consider representations $\phi,\psi $ from $\mathbb{C}$ to $\mt$ such that $\phi(1)=p,\psi(1)=q$. Then $(\phi,\psi)$ is a Cuntz pair so that
we define $[p:q]$ as the class $[\phi,\psi]\in \KK_h(\mathbb{C},B) \simeq K(B)$ and call the (generalized) essential codimension of $p$ and $q$.
\end{defn}
The following properties of $[\,:\,]$ are natrually expected to behave like the original essential codimension (see \cite[section 2]{B}). 
\begin{lem}\cite[Lemma 2.3]{Lee2}\label{L:properties}
$[\,:\,]$ has the following properties.
\begin{enumerate}
\item $[p_1:p_2]=[p_1]_0-[p_2]_0$ if either $p_1$ or $p_2$ belongs to $B$, where $[p_i]_0$ is the $K_0$-class of a projection $p_i$,
 \item $[p_1:p_2]=-[p_2:p_1],$
 \item $[p_1:p_3]=[p_1:p_2]+[p_2:p_3],\text{when sensible},$
 \item $[p_1+q_1:p_2+q_2]=[p_1:p_2]+[q_1:q_2], \, \text{when sensible.}$
\end{enumerate}
\end{lem}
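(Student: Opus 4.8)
The plan is to transport each of the four identities into the Cuntz picture $\KK_h(\mathbb{C},B)$ (cf.\ \cite{Cu}) and deduce it from the elementary moves available there. As $B$ is stable we identify $\mt$ with $\mathcal{L}(H_B)$; then a $*$-homomorphism $\phi\colon\mathbb{C}\to\mt$ with $\phi(1)=p$ is just $z\mapsto zp$, a pair $(\phi,\psi)$ with $\phi(1)=p$, $\psi(1)=q$ is a Cuntz pair precisely when $p-q\in B$, a degenerate pair $(\phi,\phi)$ represents $0$, and the group operation is implemented by $\oplus$ via a fixed identification $H_B\oplus H_B\cong H_B$. For (1): if one of $p_1,p_2$ lies in $B$, so does the other, since their difference does; hence both $\phi_i$ are valued in $B$, and under the canonical isomorphism $\KK_h(\mathbb{C},B)\cong K_0(B)$ a Cuntz pair whose two legs land in $B$ is sent to the difference of the $K_0$-classes of $\phi_1(1)$ and $\phi_2(1)$, giving $[p_1:p_2]=[p_1]_0-[p_2]_0$. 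Identity (2) is orientation reversal: $[\phi,\psi]=-[\psi,\phi]$, so $[p:q]=-[q:p]$.

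For (3), given $p_1,p_2,p_3$ with $p_1-p_2,\ p_2-p_3\in B$ — whence all pairwise differences, $p_1-p_3$ included, lie in $B$ — choose $\phi_i$ with $\phi_i(1)=p_i$, so that $[p_1:p_2]+[p_2:p_3]$ is represented by $(\phi_1\oplus\phi_2,\ \phi_2\oplus\phi_3)$. I would run a rotation homotopy: with $u_t$ the standard path of unitaries in $\mathcal{L}(H_B\oplus H_B)$ from $1$ to the coordinate flip, the pair $\bigl(\phi_1\oplus\phi_2,\ \Ad(u_t)(\phi_2\oplus\phi_3)\bigr)$ stays a Cuntz pair, because $\operatorname{diag}(p_2,p_3)=\operatorname{diag}(p_2,p_2)+\operatorname{diag}(0,p_3-p_2)$, $\operatorname{diag}(p_2,p_2)$ commutes with $u_t$, and $p_1-p_2,\ p_3-p_2\in B$, so the difference of the two legs equals $\operatorname{diag}(p_1-p_2,0)-\Ad(u_t)\bigl(\operatorname{diag}(0,p_3-p_2)\bigr)$, which remains in $C([0,1])\otimes B$. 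At $t=1$ one reaches $(\phi_1\oplus\phi_2,\ \phi_3\oplus\phi_2)=(\phi_1,\phi_3)\oplus(\phi_2,\phi_2)$, whose degenerate summand is $0$; hence $[p_1:p_3]=[p_1:p_2]+[p_2:p_3]$. (Identity (2) is the case $\phi_1=\phi_3$.)

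For (4), since $p_i\perp q_i$ the pointwise sum $\phi_i+\eta_i\colon z\mapsto z(p_i+q_i)$ is a $*$-homomorphism, so $(\phi_1+\eta_1,\ \phi_2+\eta_2)$ represents $[p_1+q_1:p_2+q_2]$, while $(\phi_1\oplus\eta_1,\ \phi_2\oplus\eta_2)$ represents $[p_1:p_2]+[q_1:q_2]$. I would embed the first pair into $\mathcal{L}(H_B\oplus H_B)$ by $x\mapsto\operatorname{diag}(x,0)$ (this only appends the degenerate pair $(0,0)$) and rotate it onto the second: put $Q_i=\operatorname{diag}(q_i,q_i)$, $J_i=\left(\begin{smallmatrix}0&-q_i\\ q_i&0\end{smallmatrix}\right)\in\mathcal{L}(H_B\oplus H_B)$, and $R^{(i)}_t=(1-Q_i)+\cos t\,Q_i+\sin t\,J_i$; one checks that $R^{(i)}_t$ is unitary, is the identity on $\operatorname{ran}(1-Q_i)$, and satisfies $\Ad\bigl(R^{(i)}_{\pi/2}\bigr)\operatorname{diag}(p_i+q_i,0)=\operatorname{diag}(p_i,q_i)$. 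Because $q_1-q_2\in B$ forces $R^{(1)}_t-R^{(2)}_t\in B$, and $\operatorname{diag}(p_1+q_1,0)-\operatorname{diag}(p_2+q_2,0)\in B$, the homomorphisms $z\mapsto\Ad(R^{(1)}_t)\bigl(\operatorname{diag}(z(p_1+q_1),0)\bigr)$ and $z\mapsto\Ad(R^{(2)}_t)\bigl(\operatorname{diag}(z(p_2+q_2),0)\bigr)$ form a Cuntz pair for every $t$, yielding the required homotopy (from $[p_1+q_1:p_2+q_2]$ at $t=0$ to $[p_1:p_2]+[q_1:q_2]$ at $t=\pi/2$).

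I expect the crux to be (4): the naive attempt to connect $\operatorname{diag}(p_i+q_i,0)$ and $\operatorname{diag}(p_i,q_i)$ by a single uniform unitary path fails, since these projections have non-isomorphic complements in general, and one is forced to the $q_i$-dependent rotations $R^{(i)}_t$ — the point being that, as $q_1-q_2\in B$, they agree modulo $B$ and hence keep the difference of the two legs inside $B$ along the whole homotopy. This same style of verification — making sure the conjugating moves never take the leg-difference out of $B$ — is the one delicate ingredient in (3) as well; items (1) and (2) are immediate from the formalism.
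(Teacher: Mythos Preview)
The paper does not prove this lemma; it is quoted verbatim from \cite[Lemma~2.3]{Lee2} and stated without argument, so there is no ``paper's own proof'' to compare against. Your proposal supplies a correct, self-contained argument in the Cuntz picture of $\KK_h(\mathbb{C},B)$: items (1) and (2) are indeed immediate from the identification with $K_0(B)$ and from orientation reversal, and your rotation homotopies for (3) and (4) are valid. In (3) the key observation that $\operatorname{diag}(p_2,p_2)$ commutes with the rotation unitaries, so that the leg-difference reduces to $\operatorname{diag}(p_1-p_2,0)-\Ad(u_t)\operatorname{diag}(0,p_3-p_2)\in M_2(B)$, is exactly what is needed. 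In (4) your $q_i$-dependent rotations $R^{(i)}_t$ do the job, and the verification that $R^{(1)}_t-R^{(2)}_t\in M_2(B)$ together with $A_1-A_2\in M_2(B)$ forces $\Ad(R^{(1)}_t)A_1-\Ad(R^{(2)}_t)A_2\in M_2(B)$ is straightforward (write $R^{(1)}_t=R^{(2)}_t+K_t$ and expand). One cosmetic point: the ``standard path from $1$ to the coordinate flip'' is usually taken through $U(2)\cdot 1\subset\mathcal{L}(H_B\oplus H_B)$ via rotations landing at $\left(\begin{smallmatrix}0&-1\\1&0\end{smallmatrix}\right)$ rather than the flip itself; either works for $\Ad$, and your argument only uses that scalar diagonal matrices commute with the path, which holds for both.
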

 \begin{lem}\cite[Lemma 2.4]{Lee2}\label{L:unitaryequi}
  Let $p$ and $q$ be projections in $\mt$ such that $p-q \in B$. If there is a
  unitary $U \in 1+ B$ such that $UpU^*=q$, then $[p:q]=0$. In
  particular, if $\| p- q\|<1 $, then $[p:q]=0$.
 \end{lem}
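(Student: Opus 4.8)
The plan is to identify the generalized essential codimension $[p:q]$ with a generalized Fredholm index and then read off its vanishing directly from the unitary $U$. I would first recall that, under the isomorphism $\KK_h(\mathbb{C},B)\simeq K_0(B)$ used in Definition \ref{D:BDF}, the class $[\phi,\psi]=[p:q]$ of the Cuntz pair with $\phi(1)=p$, $\psi(1)=q$ corresponds to the ($K_0(B)$-valued) generalized Fredholm index of the adjointable operator $T\colon pH_B\to qH_B$, $T\xi=q\xi$, i.e. the one given by the element $qp\in\mt=\mathcal{L}(H_B)$. The hypothesis $p-q\in B=\mathcal{K}(H_B)$ makes $T$ a $B$-Fredholm operator: with $S\colon qH_B\to pH_B$, $S\eta=p\eta$, the compositions $ST=pqp|_{pH_B}$ and $TS=qpq|_{qH_B}$ differ from the respective identity operators by elements of $B$ (namely $p(p-q)p$ and $q(q-p)q$), so $T$ is invertible modulo $\mathcal{K}$ and has a well-defined index.

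Next I would bring in $U$. Since $UpU^*=q$, for $\xi\in pH_B$ we have $qU\xi=UpU^*U\xi=Up\xi=U\xi$, so $U$ carries $pH_B$ into $qH_B$; symmetrically $U^*qU=p$ shows $U^*$ carries $qH_B$ into $pH_B$, and these two restrictions are mutually inverse isometries. Hence $U$ restricts to a unitary isomorphism $U_0\colon pH_B\to qH_B$ of Hilbert $B$-modules, which is in particular $B$-Fredholm of index $0$. I would then compare $T$ with $U_0$: on $pH_B$ their difference is implemented by $(q-U)p=qp-Up=(q-p)p-(U-1)p$, which lies in $B$ because $q-p\in B$ and $U-1\in B$. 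Thus $T-U_0$ is a $B$-compact operator $pH_B\to qH_B$, and invariance of the index under $B$-compact perturbations gives $[p:q]=\Ind T=\Ind U_0=0$. For the final assertion I would invoke the standard fact that $\|p-q\|<1$ forces $p$ and $q$ to be conjugate by a unitary in $1+B$ — polar-decompose $v=qp+(1-q)(1-p)\in 1+B$, which is invertible when $\|p-q\|<1$ and satisfies $vp=qv$ — and then apply what was just proved.

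The only step that is not purely formal, and hence where I expect the real work to be, is pinning down the identification of the Cuntz-pair class $[\phi,\psi]$ with $\Ind(qp\colon pH_B\to qH_B)$, together with the basic theory of Fredholm operators on Hilbert $B$-modules and their $K_0(B)$-valued index in the spirit of Mingo and Kasparov. It is worth stressing that one cannot instead argue purely within the $\KK$-picture by conjugating the Cuntz pair along a path from $1$ to $U$: indeed $U$ need not lie in the identity component of the unitary group of $1+B$ — the component group of that unitary group is $K_1(B)$, which may be nonzero even for simple $B$ — so the index description genuinely drives the argument for the general statement, while the path argument would suffice only for the special case $\|p-q\|<1$.
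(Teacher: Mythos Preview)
The paper does not actually prove this lemma: it is quoted verbatim from \cite[Lemma~2.4]{Lee2} without argument, so there is nothing to compare your proposal against directly.

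That said, your argument is correct. The identification you single out as ``the only step that is not purely formal''---namely that under $\KK_h(\mathbb{C},B)\simeq K_0(B)$ the class $[p:q]$ becomes the index of $qp\colon pH_B\to qH_B$---is exactly what is established in the first half of the proof of Lemma~\ref{L:BDF} in the present paper (the reduction of the Cuntz pair to the module on $pH_B\oplus qH_B$ with operator $\bigl(\begin{smallmatrix}0&pq\\ qp&0\end{smallmatrix}\bigr)$), and that reduction nowhere uses the full, properly infinite hypothesis; only the final passage to $\Ind(V^*W)$ does. So you may simply cite that computation. Your compact-perturbation step $qp-Up=(q-p)p-(U-1)p\in B$ and the polar decomposition of $qp+(1-q)(1-p)\in 1+B$ for the ``in particular'' clause are both fine.

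One small remark on your closing comment: while you are right that $U$ need not be homotopic to $1$ in the unitaries of $1+B$, a path is not needed even within the $\KK$-picture. The Kasparov module $\bigl(\phi\oplus\psi,\bigl(\begin{smallmatrix}0&1\\1&0\end{smallmatrix}\bigr)\bigr)$ is a compact perturbation of $\bigl(\phi\oplus\psi,\bigl(\begin{smallmatrix}0&U^*\\U&0\end{smallmatrix}\bigr)\bigr)$ because $U-1\in B$, and the latter is degenerate since $Up=qU$. This is exactly your index argument rephrased, so the distinction you draw between ``the index description'' and ``arguing purely within the $\KK$-picture'' is more apparent than real.
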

  A projection  $p$ in a unital $C\sp*$-algebra $A$ is called a \emph{halving} projection if both $1-p$ and $p$ are Murray-von Neumann equivalent to the unit in $A$ and a projection $p$ in $A$ is called properly infinite if there are mutually orthogonal projections $e$, $f$ in $A$ such that $e \le p$, $f \le p$, and $e \sim f \sim p$. Then it is easy to check that a projection in the multiplier algebra of a stable $C\sp*$-aglebra is Murray-von Neumann equivalent to $1$ if and only if it is full and properly infinite.

The following lemma shows that the above definition of essential codimension is the right generalization of classical BDF's essential codimension. Recall that BDF's original definition of the essential codimension of $p$ and $q$ in $B(H)$ is given by the Fredholm index of $V^*W$ where $V$ and $W$ are  isometries such that $VV^*=q$ and $W^*W=p$ \cite{BDF}.  An operator on $H_B$ is called a Fredholom operator when it is invertible modulo $\mathcal{K}(H_B)$ the ideal of compact operators. In fact, a generalized Atkinson theorem says that an opertor $F$ for which there exists a compact $K\in \mathcal{K}(H_B)$ such that $\Ker (F+K)$ and $\Ker (F+K)\sp*$ finitely generated and $\Im(F+K)$ closed is a Fredholm operator and vice virsa  \cite {Mi}. Thus we can define an index of a Fredholm operator in $K_0(B)$ as the diffence of two classes of finitely generated modules. Let us denote its index by $\Ind$. For more details, we refer the reader to \cite{We,Mi}.
\begin{lem}\label{L:BDF}
Suppose $p$, $q$ are full properly infinite projections in $\mt$ such that $p-q\in B$. Then $[p:q]=\Ind(V^*W)$ where $V, W$ isometries whose range projections are $q, p$ respectively.
\end{lem}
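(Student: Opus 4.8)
The plan is to route through Kasparov's picture of $KK$-theory: I would first rewrite the class $[p:q]=[\phi,\psi]\in\KK_h(\mathbb C,B)\simeq K_0(B)$ as the generalized Fredholm index of an explicit adjointable operator between Hilbert $B$-modules, and then recognize that operator, after conjugating by $V$ and $W$, as $V^*W$.

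\emph{Step 1: $[p:q]$ as an index.} Put $k:=p-q\in B=\mathcal K(H_B)$ and let $\phi,\psi\colon\mathbb C\to\mt$ be given by $\phi(1)=p$, $\psi(1)=q$. The difference (Cuntz--pair) Kasparov module attached to $(\phi,\psi)$ is
\[
\Bigl(H_B\oplus H_B,\ \phi\oplus\psi,\ \bigl(\begin{smallmatrix}0&1\\1&0\end{smallmatrix}\bigr)\Bigr),
\]
graded by $\bigl(\begin{smallmatrix}1&0\\0&-1\end{smallmatrix}\bigr)$; the commutator of the flip with $\bigl(\begin{smallmatrix}p&0\\0&q\end{smallmatrix}\bigr)$ equals $\bigl(\begin{smallmatrix}0&-k\\k&0\end{smallmatrix}\bigr)\in\mathcal K(H_B\oplus H_B)$, so this is a Kasparov $(\mathbb C,B)$-module whose class is by definition $[\phi,\psi]=[p:q]$. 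Compressing by the action $\bigl(\begin{smallmatrix}p&0\\0&q\end{smallmatrix}\bigr)$ (which only discards a degenerate module) puts it in standard form $\bigl(pH_B\oplus qH_B,\ 1,\ \bigl(\begin{smallmatrix}0&pq\\ qp&0\end{smallmatrix}\bigr)\bigr)$. The off--diagonal operator $qp\colon pH_B\to qH_B$ is a generalized Fredholm operator with parametrix $pq\colon qH_B\to pH_B$: using $q=p-k$ one has $pqp=p-pkp$ and $qpq=q+qkq$, which differ from the identities of $pH_B$ and $qH_B$ by the compacts $pkp\in\mathcal K(pH_B)$, $qkq\in\mathcal K(qH_B)$. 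Since under $\KK_h(\mathbb C,B)\simeq K_0(B)$ the class of a standard--form module is exactly the index of its off--diagonal operator, we get $[p:q]=\Ind\bigl(qp\colon pH_B\to qH_B\bigr)$.

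\emph{Step 2: recognizing $V^*W$.} The isometries $V,W\in\mt$ (available since $p,q$ are full and properly infinite, hence Murray--von Neumann equivalent to $1$) restrict to unitary isomorphisms of Hilbert $B$-modules $W\colon H_B\xrightarrow{\ \sim\ }pH_B$ and $V\colon H_B\xrightarrow{\ \sim\ }qH_B$, with inverses the restrictions of $W^*$ and $V^*$. Conjugating a Fredholm operator by such unitaries leaves the index unchanged, so
\[
[p:q]=\Ind\bigl(qp\colon pH_B\to qH_B\bigr)=\Ind\bigl(V^*(qp)W\colon H_B\to H_B\bigr).
\]
It remains to observe that $V^*(qp)W=V^*W$ in $\mt$: since $V^*q=V^*VV^*=V^*$ and $pW=W$ (the range of $W$ is $pH_B$), we get $V^*qpW=V^*pW=V^*W$. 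Hence $[p:q]=\Ind(V^*W)$; in particular the right--hand side does not depend on the choice of $V$ and $W$.

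\emph{Main obstacle.} Everything above is bookkeeping once one has the standard dictionary between Cuntz pairs, Kasparov modules, and Fredholm indices; the one point that must be nailed down is the \emph{sign} in Step 1, namely that $[\phi,\psi]$ corresponds to $+\Ind(qp\colon pH_B\to qH_B)$ rather than its negative. I would pin this down on the degenerate cases $\psi=0$ and $\phi=0$, where the module collapses to $(pH_B,1,0)$, resp.\ $(qH_B,1,0)$ placed in the odd slot, with classes $[p]_0$, resp.\ $-[q]_0$, in agreement with Lemma~\ref{L:properties}(1); equivalently one checks the case $p\ge q$, in which $qp\colon pH_B\to qH_B$ is onto with kernel $(p-q)H_B$, so $\Ind(qp)=[p-q]_0=[p:q]$ by Lemma~\ref{L:properties}(1),(4). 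A more elementary alternative, avoiding $KK$-modules, would be to verify directly that $V^*W$ is a generalized Fredholm operator with parametrix $W^*V$ --- from $(V^*W)(W^*V)=V^*pV\equiv1$ and $(W^*V)(V^*W)=W^*qW\equiv1$ modulo $B$ --- and then to reduce to comparable projections using additivity of the index together with Lemma~\ref{L:properties}(3)--(4); on that path the awkward step becomes producing a common refinement $r$ with $r-p,\ r-q\in B$ that is again full and properly infinite.
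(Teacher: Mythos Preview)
Your proof is correct and follows essentially the same route as the paper: both realize the Cuntz pair $(\phi,\psi)$ as the Kasparov module $\bigl(H_B\oplus H_B,\phi\oplus\psi,\bigl(\begin{smallmatrix}0&1\\1&0\end{smallmatrix}\bigr)\bigr)$, pass (by compact perturbation/compression) to the module on $pH_B\oplus qH_B$ with off-diagonal Fredholm operator $qp\colon pH_B\to qH_B$, invoke the index isomorphism $\KK(\mathbb C,B)\to K_0(B)$, and then conjugate by the unitaries $W\colon H_B\to pH_B$, $V\colon H_B\to qH_B$ to obtain $\Ind(V^*qpW)=\Ind(V^*W)$. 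Your write-up is actually more careful than the paper's about fixing the sign convention (the paper simply cites the isomorphism $\Ind$ and records the commutative square), and the elementary alternative you sketch at the end is not pursued there, but the core argument is identical.
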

\begin{proof}
Let $W$ be an isometry in $\mt$ such that $WW^*=p$, $V$ isometry in $\mt$ such that $VV^*=q$. Then $V^*W$ is a unitary modulo $B$. Thus $\Ind(V^*W)$ is well defined. Let $\phi_i:\mathbb{C} \to \mt$ such that $\phi_1(1)=p$, $\phi_2(1)=q$.
Then a triple \[(\phi_1,\phi_2,1)=\left(H_B \oplus H_B, \left(\begin{array}{cc}
                                                                            \phi_1 &  \\
                                                                             & \phi_2 \\
                                                                          \end{array}
                                                                        \right), \left(
                                                                                   \begin{array}{cc}
                                                                                     0 & 1 \\
                                                                                     1 & 0 \\
                                                                                   \end{array}
                                                                                 \right)
 \right)\] is a compact perturbation of \[\left( H_B \oplus H_B, \left(\begin{array}{cc}
                                                                            p &  \\
                                                                             & q \\
                                                                          \end{array}
                                                                        \right), \left(
                                                                                   \begin{array}{cc}
                                                                                     0 & pq \\
                                                                                     qp & 0 \\
                                                                                   \end{array}
                                                                                 \right)
 \right). \]
 The latter is decomposed to $((1-p)(H_B) \oplus( 1-q)(H_B),0,0)\\ \oplus \left(p(H_B) \oplus q(H_B), \left(\begin{array}{cc}
                                                                            p &  \\
                                                                             & q \\
                                                                          \end{array}
                                                                        \right), \left(
                                                                                   \begin{array}{cc}
                                                                                     0 & pq \\
                                                                                     qp & 0 \\
                                                                                   \end{array}
                                                                                 \right)
 \right)$ so that
 $(\phi_1,\phi_2,1)$ is represented as \[\left(p(H_B) \oplus q(H_B), \left(\begin{array}{cc}
                                                                            p &  \\
                                                                             & q \\
                                                                          \end{array}
                                                                        \right), \left(
                                                                                   \begin{array}{cc}
                                                                                     0 & pq \\
                                                                                     qp & 0 \\
                                                                                   \end{array}
                                                                                 \right)
 \right).\] By the isomorphism $\Ind: \KK(\mathbb{C}, B) \to K_0(B)$ which sends $(\phi_1, \phi_2,1)$ to $\Ind(pq)$, $[p:q]=\Ind(pq)=\Ind(V^*qpW)=\Ind(V^*W)$.
 In fact, it is realized as the following diagram
\[
 \begin{CD}
  H_B\oplus H_B @>V^*qpW\oplus I>>  H_B\oplus H_B\\
  @VV W\oplus I V                @A V^*\oplus I  AA \\
  p(H_B)\oplus H_B         @>> qp\oplus I > q(H_B)\oplus H_B.
  \end{CD}
 \]
\end{proof}

The following interesting fact for a section $p\in \mu$ was proved by Kucerovsky and Ng \cite[Theorem 3.1]{KuNg} under the condition that its pointwise evaluation $p_x$ is halving for each $x\in X$. However, it can be shown under a weaker condtion that $p_x$ is full and properly infinite for each $x$.    
\begin{thm}\cite[Corollary 3.8]{Lee2}\label{T:triviality}
Let $B$ be a separable, $\sigma_p$-unital, and stable $C\sp*$-algebra so that $\mt$ has a halving projection and X a finite dimensional compact Hausdorff space. Then a complementable subfield of Hilbert modules associated with a projection-valued map $p:X \to \mt_s$ is isomorphic to a trivial field provided that any pointwise evaluation $p_x$ is full and properly infinite in $\mt$.
\end{thm}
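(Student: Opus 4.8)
The plan is to reduce the statement to the halving case established by Kucerovsky and Ng by doubling the standard module. Two facts will be used repeatedly. First, since $B$ is stable, a projection in $\mt$ is full and properly infinite if and only if it is Murray--von Neumann equivalent to $1$ (noted above); in particular each fibre $E_x:=p_x(H_B)$ is isomorphic, as a Hilbert $B$-module, to $H_B$. Second, for any complementable submodule $F\subseteq H_B$ one has $F\oplus H_B\cong H_B$: $F$ is countably generated (being complementable in the countably generated $H_B$), so this is the Kasparov stabilization theorem \cite{Kas80}; taking $F=H_B$ gives $H_B\oplus H_B\cong H_B$, so $\mathcal L(H_B\oplus H_B)$ may be identified with $\mathcal L(H_B)=\mt$.

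First I would replace the complementable subfield $((E_x)_{x\in X},\Gamma')$ of the constant field $((H_B)_{x\in X},\Gamma)$ by the evident complementable subfield $(E_x\oplus 0)_{x\in X}$ of $((H_B\oplus H_B)_{x\in X},\Gamma\oplus\Gamma)$. Adjoining a zero block preserves $*$-strong continuity, so by Proposition~\ref{P:bundlesection} this is the subfield associated with the projection-valued map $x\mapsto p_x\oplus 0$, and the fibrewise unitaries $\xi\mapsto(\xi,0)$ give an isomorphism of fields $((E_x)_{x\in X},\Gamma')\cong(E_x\oplus 0)_{x\in X}$. The gain is that $p_x\oplus 0$ is now a \emph{halving} projection of $\mathcal L(H_B\oplus H_B)\cong\mt$ for every $x$: its range $E_x\oplus 0\cong E_x\cong H_B\cong H_B\oplus H_B$ is complementable, so $p_x\oplus 0\sim 1$; and the range of $1-(p_x\oplus 0)$, namely $E_x^\perp\oplus H_B$, is complementable and, by the second fact above, isomorphic to $H_B\cong H_B\oplus H_B$, so $1-(p_x\oplus 0)\sim 1$.

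Now all hypotheses of the halving case hold for $x\mapsto p_x\oplus 0$: $B$ is separable, $\sigma_p$-unital and stable; $\mathcal L(H_B\oplus H_B)\cong\mt$ has a halving projection; and $X$ is finite-dimensional compact Hausdorff. Hence \cite[Theorem~3.1]{KuNg} shows that the subfield $(E_x\oplus 0)_{x\in X}$ is isomorphic to a trivial field. A trivial field with fibre $H_B\oplus H_B$ is, via a fixed unitary $H_B\oplus H_B\cong H_B$, isomorphic to a trivial field with fibre $H_B$; combined with the isomorphism of the previous paragraph, this proves that $((E_x)_{x\in X},\Gamma')$ is isomorphic to a trivial field.

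All of the real content — the construction of a \emph{global} trivialization over a finite-dimensional base — is thus carried by \cite{KuNg}, and the only step in the argument above that requires care is the module bookkeeping that makes $p_x\oplus 0$ halving. Were one instead to seek a proof avoiding \cite{KuNg}, the main obstacle would be precisely that global step: one would first trivialize the subfield locally near each point (using $p_{x_0}\sim 1$ to trivialize the fibre at $x_0$ and a perturbation argument to propagate this over a neighbourhood in the $*$-strong topology — itself the delicate point), and then patch the local trivializations over a finite closed cover of multiplicity at most $\dim X+1$, the patching relying on contractibility of the unitary group of $\mathcal L(H_B)$.
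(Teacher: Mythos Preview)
Your argument is sound: the doubling trick $p_x\mapsto p_x\oplus 0$ does convert the ``full and properly infinite'' hypothesis into the ``halving'' hypothesis, precisely because Kasparov stabilization forces $(1-p_x)H_B\oplus H_B\cong H_B$ regardless of whether $1-p_x$ itself is full, and then \cite[Theorem~3.1]{KuNg} finishes. The bookkeeping you flag (continuity of $x\mapsto p_x\oplus 0$, the field isomorphism $\xi\mapsto(\xi,0)$, and the fixed identification $H_B\oplus H_B\cong H_B$) is routine.

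As for comparison: the present paper does not prove this statement at all --- it is quoted from \cite[Corollary~3.8]{Lee2} and used as a black box. The surrounding commentary indicates only that \cite{KuNg} handled the halving case and that \cite{Lee2} weakened the hypothesis to full and properly infinite, without reproducing the argument. So there is no proof here to compare yours against. Your reduction is exactly the natural way to pass from the weaker hypothesis to the stronger one, and it may well be what \cite{Lee2} does; in any event it is correct and self-contained modulo \cite{KuNg}. The closing paragraph sketching how one might bypass \cite{KuNg} entirely (local trivialization plus patching over a cover of bounded multiplicity, using contractibility of the unitary group) is also accurate as a description of the underlying mechanism, though of course not needed for the proof as written.
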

This theorem means that $p\in \mu$ is Murray-von Neumann equivalent to $1_{\mu}$ if $p_x$ is Murray-von Neumann equivalent to $1_{\mt}$. Theorem \ref{T:triviality} will be used repeatedly in the next section. \\
 
The following lemma, which was shown in \cite{Lee2}, says that if a projection valued section or the corresponding Hilbert module bundle has ambient fibers or ``infinite dimensional'' fibers it has any ``finite dimensional'' trivial subbundle. It means that we can deform an ambient Hilbert module bundle by disembedding or embedding a subbundle. Again the infinite dimensional fiber condition is expressed in terms of an algebraic condition on  each pointwise evaluation of the section; i.e., the projection is Murray-von Neumann equivalent to $1_{\mt}$.        
\begin{lem}\cite[Lemma 3.10]{Lee2}\label{L:subprojection}
Let $p$ be a section from $X$ to $\mt$ with respect to the strict topology on $\mt$  where  $B$ is a $\sigma_p$-unital, stable $C\sp*$-algebra of real rank zero such that $\mt$ contains a halving full projection. In addition, when we denote its image on $x\in X$ by $p_x$, assume that $p_x$ is a properly infinite full projection for each $x$. Then for any $\alpha \in K_0(B)$ there exists a norm continuous section $\mathbf{r}$ from $X$ to $B$ such that  $r_x \leq p_x$ such that $[r]_{K_0(B)}=\alpha$.
\end{lem}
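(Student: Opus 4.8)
The plan is to trivialize the Hilbert module bundle attached to $p$ by Theorem \ref{T:triviality}, and then conjugate into $p$ a constant sub‑projection of the prescribed $K_0$‑class. Write $I=C(X)\otimes B$ (resp. $C_0(X)\otimes B$), so that $\mu=\mathcal{M}(I)$.

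\emph{Trivialization.} Each fibre $p_x$ is full and properly infinite in $\mt=\mathcal{L}(H_B)$, hence Murray--von Neumann equivalent to $1_{\mt}$; so Theorem \ref{T:triviality} applies, and the complementable subfield of Hilbert modules determined by $p$ (via Proposition \ref{P:bundlesection}) is isomorphic to the constant field $((H_B)_{x\in X},\Gamma)$. Reading this isomorphism back through Proposition \ref{P:bundlesection} we obtain $V\in\mu$ with $V^{*}V=1_{\mu}$ and $VV^{*}=p$. For $X=[0,1]$ or $X=\mathbb{T}$ this is immediate; for $X=[0,\infty)$ or $X=\mathbb{R}$ one applies Theorem \ref{T:triviality} over the compact pieces of an exhaustion and splices the resulting isometries: at a joining point $x_0$ the two trivializations of the common fibre $p_{x_0}(H_B)$ differ by a unitary of $\mt$, which, $\mathcal{U}(\mt)$ being norm‑connected, is absorbed by a norm‑continuous unitary correction supported near $x_0$, and the localized corrections multiply together strictly.

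\emph{Pull‑back of a constant projection.} Since $B$ is $\sigma_{p}$‑unital and stable, $K_{0}(B)$ is the union of the subgroups $K_{0}(e_{k}Be_{k})$ for an approximate identity of projections $(e_{k})$, so we may fix a projection $e\in B$ with $[e]_{0}=\alpha$; regard $e$ as the constant section $x\mapsto e$, which lies in $I$ when $X$ is compact. Set $r:=VeV^{*}$. Using $V^{*}V=1_{\mu}$ one gets $r^{*}=r$ and $r^{2}=Ve^{2}V^{*}=r$, so $r$ is a projection, and $r=VeV^{*}\le V1_{\mu}V^{*}=p$, that is $r_{x}\le p_{x}$ for every $x$. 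The element $Ve$ is a partial isometry with $(Ve)^{*}(Ve)=e$ and $(Ve)(Ve)^{*}=r$; as $I$ is an ideal of $\mu$ and $e\in I$, we have $Ve\in I$ and hence $r=(Ve)(Ve)^{*}\in I$, i.e.\ $r$ is a norm‑continuous section from $X$ into $B$. Finally $Ve$ implements a Murray--von Neumann equivalence $e\sim r$ in $I$ (fibrewise, $V_{x}e\in B$ gives $e\sim r_{x}$ in $B$), so $[r]_{K_{0}(B)}=[e]_{0}=\alpha$.

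\emph{Main obstacle.} Everything after the trivialization is formal, so the substance of the proof is the construction of the global isometry $V$: for compact $X$ this is precisely Theorem \ref{T:triviality} together with Proposition \ref{P:bundlesection}, and for the non‑compact $X$ it is the exhaustion‑and‑splice argument indicated above, the one point requiring care being that the splicing corrections stay strictly continuous with a convergent infinite product. (The secondary point is realizing $\alpha$ by a projection of $B$; this is available from $\sigma_{p}$‑unitality and stability in the cases where the lemma is used.)
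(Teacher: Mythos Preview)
The paper does not supply its own proof of this lemma: it is quoted verbatim from \cite{Lee2} in the preliminaries section and left unproved. So there is nothing in the paper to compare your argument against, and I can only evaluate the proposal on its merits.

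Your overall strategy is the right one. Trivializing the field via Theorem~\ref{T:triviality} to obtain $V\in\mu$ with $V^{*}V=1$, $VV^{*}=p$, and then setting $r=VeV^{*}$ for a fixed projection $e\in B$, does produce a norm--continuous projection section with $r\le p$ and $r\sim e$, hence $[r_x]_0=[e]_0$. The bookkeeping you give for this part is correct.

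The substantive gap is exactly the step you label ``secondary'': the existence of a projection $e\in B$ with $[e]_0=\alpha$ for an \emph{arbitrary} $\alpha\in K_0(B)$. Your argument (``$K_0(B)$ is the union of the $K_0(e_kBe_k)$'') only places $\alpha$ in some corner's $K_0$-group; it does not exhibit $\alpha$ as the class of a single projection. In fact the hypotheses of the lemma do not force this: take $B=K$, which is $\sigma$-unital, stable, of real rank zero, and has $\mt=B(H)$ with halving projections. Here $K_0(B)=\mathbb{Z}$, but every projection $e\in K$ has $[e]_0=\rank(e)\ge 0$, so $\alpha=-1$ is not $[e]_0$ for any $e$. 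Thus your construction cannot produce the promised $r$ in this case, and the parenthetical ``this is available \dots\ in the cases where the lemma is used'' does not close the gap in a proof of the lemma as stated. Either one must restrict to those $\alpha$ realized by projections (which is automatic in the purely infinite simple case used in Theorem~3.1, by Cuntz's theorem), or one must argue with a formal difference of sub-projections and adjust the conclusion accordingly; as written, your proof does neither.
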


We will call a $B$-valued norm continuous section  simply a `` finite '' section map.    Since two projections are same in the corona algebra  when they are equal modulo any $B$-valued norm continuous section map, Lemma \ref{L:subprojection} enable us to deform the local lifting without changing $\mathbf{p}$ in the corona algebra. In other words, we allow the addition or subtraction of any $B$-valued section map in each interval $X_i$ while keeping the same class. 

\section{Main Results; Classification}

Let $\mathbf{p}=(p_0,p_1,\dots,p_n), \mathbf{q}=(q_0,q_1,\dots,q_n)$ be two projections in the corona algebra of $C(X)\otimes B$ for a fixed partition of $X$.  In addition, we put an algebraic condition on each fiber such that the fiber is isomophic to $H_B$. Since $p_i(x)$ is a projection in $\mt$ for any $x$ in $X_i$, this condition is equivalent to the fact that $p_i(x)$ is full and properly infinite. If $B=K$, then this condition means that each fiber is an infinite dimensional subspace of a separable infinite dimensional Hilbert space $H$. From discontinuity condtions given by the partition we can associate to these projections the essential codimensions $k_i=[p_i(x_i):p_{i-1}(x_i)], l_i=[q_i(x_i),q_{i-1}(x_i)] \in K_0(B)$ for $i=1,\dots, n$.

 Without discontinuity conditions, it has been shown that if the bundle associated to $p\in \mu$ has the fiber isomorphic to $H_B$ for any $x \in X$  the bunlde is trivial for a general stable $C\sp*$-algebra $B$ in \cite[Corollary 3.8]{Lee2}. When $X$ has no endpoints, then we can show that finite discontinuities of any homogeneous Hilbert module bundle for a smaller class of $C\sp*$-algebras $B$ can be resovled. Thus this bundle is isomorphic to the trivial bundle so that the corrsponding projection $\mathbf{p}$, which is liftable, is Murray-von Neumann equivalent $1_{\mathcal{Q}(C(X)\otimes B)}$. So does $\mathbf{q}$. Hence we conclude that $\mathbf{p}\sim \mathbf{q} \sim 1_{\mathcal{Q}(C(X)\otimes B)}$. 
\begin{thm}
 Let $B$ be a $\sigma$-unital, purely infinite, simple $C\sp*$-algebra such that $\mt$ has real rank zero. Suppose $\mathbf{p}=(p_0,p_1,\dots,p_n)$ be the projection in the corona algebra of $C(X)\otimes B$ such that $p_i(x)$ is full and properly infinite projection in $\mt$ for all $x\in X_i$ and $i$. If $X$ has no endpoints, then $\mathbf{p}\sim 1_{\mathcal{Q}(C(X)\otimes B)}$. 
\end{thm}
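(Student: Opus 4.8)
The plan is to remove the discontinuities of the local lifting one partition point at a time, so that $\mathbf{p}$ becomes represented by an honest projection‑valued section over $X$, and then to invoke the triviality theorem.

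For each partition point $x_i$ (including $x_0=0=1$ when $X=\mathbb{T}$) set $k_i=[p_i(x_i):p_{i-1}(x_i)]\in K_0(B)$, the essential codimension that measures the jump there. Since $B$ is purely infinite and simple, $K_0(B)=K_0(B)^{+}$, so Lemma \ref{L:subprojection}, applied to the full properly infinite projection $p_{i-1}(x_i)$, furnishes a projection $e\in B$ with $e\le p_{i-1}(x_i)$ and $[e]_0=-k_i$. On a short collar $[x_i-\varepsilon,x_i]\subset X_{i-1}$ the field $t\mapsto p_{i-1}(t)H_B$ is trivial by Theorem \ref{T:triviality}, and, $B$ being stable, one can ``slide in'' a copy of $e$ from the infinite end of $H_B$ to obtain a strictly continuous path $t\mapsto e(t)$ of projections in $B$ with $e(t)\le p_{i-1}(t)$, $e(x_i-\varepsilon)=0$ and $e(x_i)=e$ (such a path exists in the strict, though not the norm, topology). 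Replace $p_{i-1}$ on this collar by $p_{i-1}-e(\cdot)$. This changes the lifting only by a $B$‑valued section supported in the interior of $X_{i-1}$, hence leaves $\mathbf{p}$ unchanged in $\mathcal{Q}(I)$; it preserves the standing hypothesis, since subtracting $e(t)\le p_{i-1}(t)$ from a full properly infinite projection leaves one full and properly infinite; and, by Lemma \ref{L:properties}, it changes the jump at $x_i$ to $k_i+[e]_0=0$. Now $p_{i-1}(x_i)$ and $p_i(x_i)$ differ by an element of $B$ and have essential codimension $0$, so by the converse of Lemma \ref{L:unitaryequi} (valid for $B$ purely infinite simple) they are conjugate by a unitary in $1+B$; a homotopy on a further sub‑collar makes $p_{i-1}$ agree with $p_i$ at $x_i$. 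Carrying this out on pairwise disjoint collars for every partition point turns the local lifting into a single strictly continuous projection‑valued section $P\colon X\to\mathcal{M}(B)_s$ with full properly infinite fibers and $\pi(P)=\mathbf{p}$; in particular $\mathbf{p}$ lifts to an honest projection $P\in\mathcal{M}(I)$.

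It remains to see that $P\sim 1_{\mathcal{M}(I)}$. For $X=\mathbb{T}$ this is Theorem \ref{T:triviality}: the field attached to $P$ is trivial, i.e.\ there is an isometry $W\in\mathcal{M}(I)$ with $W^{*}W=1$ and $WW^{*}=P$. For $X=(-\infty,\infty)$ exhaust $X$ by the intervals $[-m,m]$; Theorem \ref{T:triviality} trivializes the field over each, and since the collars $[\pm m,\pm(m+1)]$ are contractible the trivializations may be chosen to agree on overlaps, producing a bounded, strictly continuous isometry $W$ over all of $\mathbb{R}$ with $W^{*}W=1$, $WW^{*}=P$. Either way $P=WW^{*}\sim W^{*}W=1_{\mathcal{M}(I)}$, and applying $\pi$ gives $\mathbf{p}=\pi(P)\sim 1_{\mathcal{Q}(I)}$.

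The heart of the matter is the first step, and two features of it are exactly where the full hypothesis on $B$ is spent (Theorem \ref{T:triviality} needs only ``$\mathcal{M}(B)$ of real rank zero''): that \emph{every} $k_i$ can be annihilated, which relies on the subprojection of class $-k_i$ provided by Lemma \ref{L:subprojection} and hence on the absence of a positivity obstruction in $K_0(B)$ --- this fails already for $B=\mathcal{K}$, where a nonzero total jump is a genuine obstruction on the circle; and the gluing once the essential codimension has been made $0$, i.e.\ the converse of Lemma \ref{L:unitaryequi}. The triviality step, by contrast, is a direct appeal to Theorem \ref{T:triviality} when $X$ is compact and a routine exhaustion when $X=\mathbb{R}$; I anticipate no difficulty there. (One can check that whole‑interval modifications alone would settle only the case $\sum_i k_i=0$, for either choice of $X$; it is the localized, collar‑by‑collar surgery that does the job.)
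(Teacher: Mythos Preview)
Your collar surgery has a genuine gap. The path $t\mapsto e(t)$ you produce is, by your own admission, only \emph{strictly} continuous as a map into $B$, not norm continuous: a norm-continuous path of projections in $B$ cannot pass from $0$ to a nonzero projection. But the ideal $I$ is $C_0(X)\otimes B$ (resp.\ $C(X)\otimes B$), i.e.\ \emph{norm}-continuous $B$-valued functions; a bounded, merely strictly continuous $B$-valued function lies in $\mathcal{M}(I)=C_b(X,\mathcal{M}(B)_s)$ but \emph{not} in $I$. Consequently the replacement $p_{i-1}\mapsto p_{i-1}-e(\cdot)$ does \emph{not} leave $\mathbf p$ unchanged in $\mathcal Q(I)$: you have subtracted from $\mathbf p$ the nonzero subprojection $\pi(e(\cdot))\in\mathcal Q(I)$. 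The sentence ``This changes the lifting only by a $B$-valued section \dots\ hence leaves $\mathbf p$ unchanged'' is therefore false, and the rest of the argument (gluing, then invoking Theorem~\ref{T:triviality}) is built on a lifting that no longer represents the original $\mathbf p$.

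The paper's proof avoids this by working with \emph{norm}-continuous perturbations on the \emph{whole} interval $X_i$: Lemma~\ref{L:subprojection} supplies a norm-continuous $r_i\colon X_i\to B$ with $r_i\le p_i$ and prescribed $K_0$-class, so $r_i\in C(X_i)\otimes B$ and $(p_0,\dots,p_{i-1},p_i-r_i,p_{i+1},\dots)$ genuinely represents the same $\mathbf p$. One then kills the jumps successively (the obstruction accumulates and is pushed along the partition rather than cancelled in place), and finally glues as in \cite[Theorem~3.3]{Lee}. Your final remark, that whole-interval modifications ``settle only the case $\sum_i k_i=0$'', is thus a bit misleading: what is true is that such modifications propagate the defect rather than localize it; for $X$ without endpoints this is enough, because there is always a ``next'' interval to absorb the accumulated class. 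The localized collar trick you attempt would be attractive precisely because it avoids this bookkeeping, but it cannot be carried out with a perturbation lying in $I$.

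A secondary point: the ``converse of Lemma~\ref{L:unitaryequi}'' you invoke for the gluing step is, in the paper's framework, only asserted for $\sigma$-unital purely infinite simple $B$ with $K_1(B)=0$ (see Remark~3.9); you should either add this hypothesis or, as the paper does, appeal to the gluing argument of \cite[Theorem~3.3]{Lee} instead.
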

\begin{proof}
Take $p'_0=p_0$. By Lemma \ref{L:subprojection} we can take a norm continuous projection valued function $r_1:X_1 \to B$ such that $[r_1]_0=k_1$ and $r_1 \le p_1$. Now we consider $p'_1=p_1-r_1$. Note that $[p'_1(x_1):p_{0}(x_1)]=[p_1(x_1):p_{0}(x_1)]-[r_1(x_1):0]=[p_1(x_1):p_{0}(x_1)]-[r_1]_0=0$ by Lemma \ref{L:properties}. Then we replace $p_1$ by $p_1-r_1$. Note that $(p'_0,p'_1,p_2,\dots,p_n)$ still represents the same $\mathbf{p}$. But $[p_2(x_2):p'_1(x_2))]=[p_2(x_2):p_1(x_2)]-[0:r_1(x_2)]=[p_2(x_2):p_1(x_2)]+[r_1(x_2):0]=k_2+k_1$
Thus in this way we can obtain $(p'_0,p'_1,\dots,p_n')$ inductively which define the same element $\mathbf{p}$ such that $[p'_i(x_i):p'_{i-1}(x_i)]=0$. Then as in the proof of Theorem 3.3 in \cite{Lee} we can glue the adjoining sections at the partition points so that we have a continuous bundle. Then by Theorem \ref{T:triviality} this bundle is trivial. 
\end{proof}
\begin{rem}
In fact, there exist $p$ and $q$ the lifts of $\mathbf{p}$ and $\mathbf{q}$ in the $\mu$  respectively. Hence the bundles are isomorphic in a more strong sense. i.e., $p \sim q \sim 1_{\mu}$.
\end{rem}
Now we consider the general case. Suppose $\mathbf{p} \sim  \mathbf{q}$.
Then there exists an element $u$ in $\mu $ such that $\pi(u)\sp*\pi(u)=\mathbf{p}$ and $\pi(u)\pi(u)\sp*=\mathbf{q}$. Let $u|_{X_i}$ be the restriction of $u$ on $X_i$ and $u_i=q_iu|_{X_i}p_i$. Then it follows that $u_i(x_i)-u_{i-1}(x_i)\in B$.
Since we assume that $p_i(x) \sim 1_{\mt}$ for each $x \in X_i$, there exists $v_i: X_i \to \mt $ such that $v_i\sp* v_i=1_{\mathcal{M}(C(X_i)\otimes B)}$ and $v_iv_i\sp*=p_i$ by Theorem \ref{T:triviality}.
Similarly, there exists $w_i: X_i \to \mt $ such that $w_i w_i\sp*=1_{\mathcal{M}(C(X_i)\otimes B)}$ and $w_i\sp*w_i=q_i$.
Then we can check that $w_iu_iv_i$ is a unitary in the corona algebra of $C(X_i)\otimes B$.
Thus we have a partial isometry $\widetilde{u_i}$ in $\mathcal{M}(C(X_i)\otimes B)$ such that $\pi(\widetilde{u_i}^*\widetilde{u_i})=\pi(\widetilde{u_i}\widetilde{u_i}^*)=1_{\mathcal{Q}(C(X_i)\otimes B)}$.
\begin{prop}
$T_i=w_i^*\widetilde{u_i}v_i^*$ is also a partial isometry.
\end{prop}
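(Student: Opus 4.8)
The plan is to prove the statement by a direct computation of $T_i^*T_i$, using that $T_i$ is a partial isometry precisely when $T_i^*T_i$ is a projection. First I would record the defining relations of the three factors entering $T_i$: the section $v_i$ is an isometry, $v_i^*v_i = 1_{\mathcal{M}(C(X_i)\otimes B)}$ (with $v_iv_i^* = p_i$); the section $w_i$ is a coisometry, $w_iw_i^* = 1_{\mathcal{M}(C(X_i)\otimes B)}$ (with $w_i^*w_i = q_i$); and $\widetilde{u_i}$ is a partial isometry, so that $e_i := \widetilde{u_i}^*\widetilde{u_i}$ is a projection in $\mathcal{M}(C(X_i)\otimes B)$.

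Next I would compute, cancelling $w_iw_i^* = 1$,
\[
T_i^*T_i = \bigl(w_i^*\widetilde{u_i}v_i^*\bigr)^*\bigl(w_i^*\widetilde{u_i}v_i^*\bigr) = v_i\widetilde{u_i}^* (w_iw_i^*) \widetilde{u_i} v_i^* = v_i e_i v_i^*,
\]
and then check that $v_i e_i v_i^*$ is a projection. It is manifestly self-adjoint, and using $v_i^*v_i = 1$ together with $e_i^2 = e_i$,
\[
(v_i e_i v_i^*)(v_i e_i v_i^*) = v_i e_i (v_i^*v_i) e_i v_i^* = v_i e_i^2 v_i^* = v_i e_i v_i^*.
\]
Hence $T_i^*T_i$ is a projection, so $T_i$ is a partial isometry. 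For later use one records the symmetric computation $T_i T_i^* = w_i^* (\widetilde{u_i}\widetilde{u_i}^*) w_i$, which is a projection by the same argument (now using $v_i^*v_i = 1$ to cancel the inner factor and $w_iw_i^* = 1$, $(\widetilde{u_i}\widetilde{u_i}^*)^2 = \widetilde{u_i}\widetilde{u_i}^*$ to close the square).

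There is essentially no serious obstacle here; the only thing requiring care is bookkeeping of which of $v_i, w_i$ is the isometry and which is the coisometry, since the cancellation $w_iw_i^* = 1$ (rather than $w_i^*w_i = 1$) is exactly what makes $T_i^*T_i$ collapse to a conjugate of the projection $e_i$. I would emphasize that this proposition only establishes that $T_i$ is a partial isometry — it does not yet identify its source and range projections; that refinement, which uses $\pi(\widetilde{u_i}^*\widetilde{u_i}) = \pi(\widetilde{u_i}\widetilde{u_i}^*) = 1_{\mathcal{Q}(C(X_i)\otimes B)}$ along with $v_iv_i^* = p_i$ and $w_i^*w_i = q_i$, is presumably carried out in the step that follows.
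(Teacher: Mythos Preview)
Your proof is correct and essentially identical to the paper's: the paper verifies that $T_iT_i^*$ is a projection via the same cancellation $v_i^*v_i=1$, $w_iw_i^*=1$, and $\widetilde{u_i}\widetilde{u_i}^*\widetilde{u_i}\widetilde{u_i}^*=\widetilde{u_i}\widetilde{u_i}^*$, whereas you carry out the symmetric computation for $T_i^*T_i$. The two arguments are interchangeable and use exactly the same ingredients.
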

\begin{proof}
It is enough to show that $T_iT_i^*$ is a projection.
Since it is obviously self-adjoint, the proof is complete by
\[\begin{split}
T_iT_i^*T_iT_i^*&=w_i^*\widetilde{u_i}\widetilde{u_i}^*w_iw_i^*\widetilde{u_i}\widetilde{u_i}^*w_i\\
                &=w_i^*\widetilde{u_i}\widetilde{u_i}^*\widetilde{u_i}\widetilde{u_i}^*w_i\\
                &=w_i^*\widetilde{u_i}\widetilde{u_i}^*w_i=T_iT_i^*.
\end{split}\]
\end{proof}
Note that $T_iT_i^*\leq w^*w=q_i$, $T_i^*T_i\leq v_iv_i^*=p_i$ and  
\[  \pi(T_i^*T_i)=\pi(v_iv_i^*)=\pi(p_i), \quad \pi(T_iT_i^*)=\pi(w_i^*w_i)=\pi(q_i).\]
In other words $p_i-T_i^*T_i \in C(X_i)\otimes B$, $q_i-T_iT_i^* \in C(X_i)\otimes B$. Thus we can define an index $t_i \in K_0(B)$ as $[p_i-T_i^*T_i]_0-[q_i-T_iT_i^*]_0$ where $[\,]_0$ means an element of $K_0(B)$. If $X_i$ contains an infinite end point, then $t_i=0$ by Lemma \ref{L:unitaryequi}.\\

If a projection in $B(H)$ satisfies the condition $1-p\in K$, then $1-p$ has a finite rank, so $p$ has an infinite dimensional range. Thus it is Murry-von Neumann equivalent to $1_{B(H)}$. We have the following analogue of this fact on $H_B$.  
\begin{lem}\label{L:Wegge-Olsen}
Let $p\in \mt$ such that $1-p \in B$, then $p \sim 1_{\mt}$ in $\mt$.
\end{lem}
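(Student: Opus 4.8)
The plan is to produce an explicit partial isometry in $\mt$ implementing the Murray–von Neumann equivalence $p\sim 1_{\mt}$, exploiting that $1-p$ is an honest (hence ``finite'') element of $B$ which is absorbed by the stability of $B$. First I would invoke stability of $B$: since $B\cong B\otimes \mathcal{K}$, the multiplier algebra $\mt$ contains a sequence of mutually orthogonal projections $e_1,e_2,\dots$ each Murray–von Neumann equivalent to $1_{\mt}$ with $\sum e_j = 1_{\mt}$ strictly, and in particular $1-p$, being in $B$, is dominated (up to the essential codimension calculation below) by a ``tail'' projection $f=\sum_{j\ge N} e_j$ which is itself equivalent to $1_{\mt}$ and orthogonal to $e_1,\dots,e_{N-1}$.

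The key step is then an infinite-repetition (Eilenberg swindle) argument inside $\mt$: write $1_{\mt}=q_0\oplus q_1\oplus q_2\oplus\cdots$ with each $q_j\sim 1_{\mt}$, so that $1_{\mt}\sim q_1\oplus q_2\oplus\cdots$ and hence $1_{\mt}\sim (1-p)\oplus 1_{\mt}$ once we know $1-p$ is dominated by some $q_0$. But $1-p\in B$ need not be a projection dominated by a single $q_j$; what is true is that $1-p$ is a projection in $B$, and any projection in the stable algebra $B$ is Murray–von Neumann equivalent in $\mt$ to a subprojection of any given $q_j\sim 1_{\mt}$ — this is exactly the statement that $1_{\mt}$ is an infinite projection absorbing finite ones, which follows from Lemma \ref{L:subprojection} (take $X$ a point) or directly from stability. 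Granting that, set $r\le q_0$ with $r\sim 1-p$ via a partial isometry $s\in\mt$, $s^*s=1-p$, $ss^*=r$. Then $p=1_{\mt}-(1-p)$ is Murray–von Neumann equivalent to $(q_0-r)\oplus q_1\oplus q_2\oplus\cdots$, while $1_{\mt}=q_0\oplus q_1\oplus q_2\oplus\cdots = (q_0-r)\oplus r\oplus q_1\oplus q_2\oplus\cdots$; since $r\sim 1-p\sim q_1$ and we may shift indices, both telescope to the same thing, giving $p\sim 1_{\mt}$.

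Alternatively, and perhaps more cleanly for the write-up, I would note that $[p:1_{\mt}]$ is defined since $p-1_{\mt}=-(1-p)\in B$, and by Lemma \ref{L:properties}(1) we have $[p:1_{\mt}]=[p]_0-[1_{\mt}]_0$; but $p=1_{\mt}-(1-p)$ with $1-p\in B$, so rearranging, $[1_{\mt}:p]=[1-p]_0\in K_0(B)$. The point is that $1_{\mt}$ and $p$ are both full and properly infinite (for $p$: since $1-p\in B$ is in particular not full as a complement, $p$ is full; and $p$ properly infinite follows because $p=1_{\mt}-(1-p)$ dominates $1_{\mt}-q_0$ which is $\sim 1_{\mt}$ hence properly infinite, after the swindle). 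Then by the $K_1$-surjectivity/lifting built into the index picture — specifically, since $V^*W$ for isometries $V,W$ with $VV^*=p$, $WW^*=1_{\mt}$ is a Fredholm operator with an index living in $K_0(B)$, and Lemma \ref{L:BDF} identifies $[p:1_{\mt}]=\Ind(V^*W)$ — we reduce to finding isometries realizing the equivalence, which is automatic once both projections are equivalent to $1_{\mt}$.

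The main obstacle is the step asserting $p$ is properly infinite (equivalently $p\sim 1_{\mt}$ in the first place, which is what we want): one cannot simply quote that $1-p$ is ``finite'' since $B$ is not assumed to have a trace. The honest route is the swindle, using only that $B$ is stable so that $1_{\mt}$ splits as a countable orthogonal sum of copies of itself, together with the elementary fact that a projection in $B$ is subordinate in $\mt$ to each summand. Once that is in hand, the telescoping identification $p\oplus 1_{\mt}\sim 1_{\mt}$ combined with proper infiniteness gives $p\sim 1_{\mt}$ by the standard cancellation for properly infinite projections in $\mt$ (equivalently, $p\sim 1_{\mt}$ iff $p$ is full and properly infinite, as recalled in the text after Lemma \ref{L:unitaryequi}). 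I would present the swindle as the core of the argument and relegate the verification that $1-p$ embeds into a single summand to a one-line appeal to Lemma \ref{L:subprojection} with $X=\{\mathrm{pt}\}$.
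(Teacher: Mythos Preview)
The paper does not actually prove this lemma: the entire proof reads ``Since it is well known, we omit the proof. See \cite{We}.'' So there is no argument in the paper to compare yours against.

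That said, your swindle sketch has real gaps. The most explicit one is the claim ``$r\sim 1-p\sim q_1$'': you chose the $q_j$ with $q_j\sim 1_{\mt}$, while $1-p$ lies in $B$, and these are certainly not Murray--von Neumann equivalent in general (for $B=K$ the projection $1-p$ has finite rank). So the telescoping you describe cannot be carried out as written. A step earlier you pass from $1-p\sim r$ to ``$p$ is Murray--von Neumann equivalent to $(q_0-r)\oplus q_1\oplus\cdots$''; taking complements like this requires \emph{unitary} equivalence of $1-p$ and $r$, which you have not arranged and which is essentially equivalent to what you are trying to prove. Finally, the phrase ``standard cancellation for properly infinite projections'' is exactly backwards: properly infinite projections are precisely where cancellation fails, so $p\oplus 1_{\mt}\sim 1_{\mt}$ does not by itself yield $p\sim 1_{\mt}$.

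A clean route, using only stability of $B$ and the criterion recalled in the paper just after Lemma~\ref{L:unitaryequi} (a projection in $\mt$ is $\sim 1_{\mt}$ iff it is full and properly infinite), goes as follows. Choose isometries $s_n\in\mt$ with orthogonal ranges and $\sum_n s_ns_n^*=1$ strictly; since $1-p\in B$ one has $\|s_n^*(1-p)\|\to 0$, so for some $n$ the element $a=s_n^*ps_n$ is invertible and $w=ps_na^{-1/2}$ is an isometry with $ww^*\le p$. Then $p$ is full (it dominates $ww^*\sim 1_{\mt}$), and $p$ is properly infinite: inside $ww^*\sim 1_{\mt}$ sit two orthogonal subprojections each $\sim 1_{\mt}\gtrsim p$, hence two orthogonal subprojections of $p$ each $\sim p$. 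Therefore $p\sim 1_{\mt}$.
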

\begin{proof}Since it is well known, we omit the proof. See \cite{We}.\end{proof}
\begin{lem}\label{L:index-essentialcodimension}
Consider two projections $p,q$ in $\mt$ such that $q \le p$, $p-q \in B$, and $p \sim 1_{\mt}$.
Then $[p:q]=[p-q]_0$.
\end{lem}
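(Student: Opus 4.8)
The plan is to reduce the statement to the formal properties of the generalized essential codimension recorded in Lemma \ref{L:properties}. The first step is the observation that, because $q\le p$, the element $e:=p-q$ is itself a projection: it is self-adjoint and $e^{2}=p-pq-qp+q=p-q=e$ since $pq=qp=q$. By hypothesis $e\in B$, and $qe=q(p-q)=0=eq$, so $p=q+e$ exhibits $p$ as an orthogonal sum of the projection $q$ and a projection $e\in B$; in particular $[p:q]$, $[q:q]$ and $[e:0]$ are all defined.

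The main route is then a one-line application of additivity. Applying Lemma \ref{L:properties}(4) to the two orthogonal decompositions $p=q+e$ and $q=q+0$ — which is "sensible" since the slotwise differences $q-q=0$ and $e-0=e$ both lie in $B$ — gives
\[
[p:q]=[\,q+e : q+0\,]=[q:q]+[e:0].
\]
Here $[q:q]=0$ (apply Lemma \ref{L:properties}(3) with all three projections equal to $q$, so that $[q:q]=[q:q]+[q:q]$ in the group $K_0(B)$; or use Lemma \ref{L:unitaryequi} with $U=1$), and $[e:0]=[e]_0-[0]_0=[e]_0$ by Lemma \ref{L:properties}(1) since $e\in B$. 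Hence $[p:q]=[e]_0=[p-q]_0$.

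An alternative that uses the hypothesis $p\sim 1_{\mt}$ and the preceding Lemma \ref{L:Wegge-Olsen} directly is to argue through the Fredholm description of Lemma \ref{L:BDF}: pick an isometry $W\in\mt$ with $WW^{*}=p$; then $1-W^{*}qW=W^{*}(p-q)W=W^{*}eW\in B$, so $W^{*}qW\sim 1_{\mt}$ by Lemma \ref{L:Wegge-Olsen}, and since $q\sim W^{*}qW$ via the partial isometry $qW$ we conclude $q\sim 1_{\mt}$, so $q$ is full and properly infinite and Lemma \ref{L:BDF} is available. Choosing an isometry $V$ with $VV^{*}=q$, one has $pV=V$, hence $(V^{*}W)(V^{*}W)^{*}=V^{*}pV=1$ and $(V^{*}W)^{*}(V^{*}W)=W^{*}qW=1-W^{*}eW$, so $V^{*}W$ is Fredholm with $\Ker(V^{*}W)^{*}=0$ and $\Ker(V^{*}W)=W^{*}eW(H_B)$; therefore $[p:q]=\Ind(V^{*}W)=[W^{*}eW]_0=[e]_0=[p-q]_0$, using $W^{*}eW\sim e$. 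The only points needing care are the side conditions — in the first route, verifying the "when sensible" clauses of Lemma \ref{L:properties} (routine once $e$ is identified as a projection in $B$), and in the second route, verifying that $q$ is full and properly infinite, which is exactly what the $W^{*}qW$ computation together with Lemma \ref{L:Wegge-Olsen} supplies; I do not expect any genuine difficulty beyond this bookkeeping.
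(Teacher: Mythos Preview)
Your proposal is correct, and in fact you give two proofs. Your ``alternative'' route via Lemma \ref{L:BDF} is exactly the paper's argument: one first shows $q\sim 1_{\mt}$ by computing $W^{*}qW$ and invoking Lemma \ref{L:Wegge-Olsen}, then evaluates $\Ind(V^{*}W)$ directly to get $[p-q]_0$.

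Your first route, however, is genuinely different and more elementary. It bypasses the Fredholm picture altogether, using only the additivity in Lemma \ref{L:properties}(4) together with part (1). Notably, that argument never touches the hypothesis $p\sim 1_{\mt}$, so it actually establishes the stronger statement that $[p:q]=[p-q]_0$ whenever $q\le p$ and $p-q\in B$. The paper's route, by contrast, needs $p\sim 1_{\mt}$ in order to make both $p$ and $q$ full and properly infinite so that Lemma \ref{L:BDF} applies; what it buys is an explicit identification with the classical BDF essential codimension via a concrete Fredholm index computation, which is thematically aligned with the index-theoretic framework used later in the paper (e.g.\ in defining the invariants $t_i$).
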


\begin{proof}
First we claim that there is an isometry whose image projection is $q$. By the assumption on $p$ we let $x$ be an isometry whose image projection is $p$.
Consider $p'=x^*px$ and $q'=x^*qx$, then $p' \ge q'$ and $p'-q'=1-q' \in B$. Then by Lemma \ref{L:Wegge-Olsen} $q' \sim 1_{\mt}$.  But $(qx)(qx)^*=qxx^*q=qpq=q \sim x^*q^*qx=x^*qx \sim 1_{\mt}$. So the claim is complete.

Now we take $W,V$ isometries corresponding to $p,q$ respectively. Then $V^*W(V^*W)^*=V^*WW^*V=V^*pV=V^*qpV=V^*qV=V^*VV^*V=1$.
Thus \[\begin{split}
\Ind(V^*W)&=[1-(V^*W)^*V^*W]_0=[1-W^*VV^*W]_0=[W^*(p-VV^*)W]_0\\
          &=[W^*(p-q)W]_0=[(p-q)WW^*(p-q)]_0=[(p-q)p(p-q)]_0\\
          &=[p-q]_0.
\end{split}\]
\end{proof}
The following formula is one of key results in this article.
\begin{prop}\label{P:index-essentialcodimension}
Assume $\mathbf{p}=(p_0,p_1,\dots,p_n) \sim \mathbf{q}=(q_0,q_1,\dots,q_n)$. Then the index $t_i=[p_i-T_i^*T_i]_0 - [q_i-T_iT_i^*]_0$ can be written using the essential codimension as follows;
\[t_i=[p_i:T_i^*T_i]-[q_i:T_iT_i^*].\]
\end{prop}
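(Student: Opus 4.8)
The plan is to observe that the asserted identity is nothing more than two applications of Lemma~\ref{L:index-essentialcodimension}, one for each summand, carried out in the multiplier algebra of the interval $C\sp*$-algebra $C(X_i)\otimes B$ rather than in $\mt$. Since $t_i$ was \emph{defined} to be $[p_i-T_i^*T_i]_0-[q_i-T_iT_i^*]_0$, it suffices to establish the two equalities
\[
[p_i:T_i^*T_i]=[p_i-T_i^*T_i]_0 \quad\text{and}\quad [q_i:T_iT_i^*]=[q_i-T_iT_i^*]_0,
\]
both read inside $K_0(C(X_i)\otimes B)$, and then subtract. Here the essential codimensions on the left are those of Definition~\ref{D:BDF} formed over the coefficient algebra $C(X_i)\otimes B$ (legitimate since $p_i-T_i^*T_i$ and $q_i-T_iT_i^*$ belong to that ideal), and $K_0(C(X_i)\otimes B)$ is identified with $K_0(B)$ exactly as in the definition of $t_i$.

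First I would check the hypotheses of Lemma~\ref{L:index-essentialcodimension} for the pair $(p_i,\,T_i^*T_i)$, with the ambient algebra read as $C(X_i)\otimes B$; this algebra is again stable, so the auxiliary Lemma~\ref{L:Wegge-Olsen} remains available over it. We already know $T_i^*T_i\le v_iv_i^*=p_i$ and $p_i-T_i^*T_i\in C(X_i)\otimes B$, and $p_i\sim 1_{\mathcal{M}(C(X_i)\otimes B)}$ because the section $v_i$ furnished by Theorem~\ref{T:triviality} is an isometry with $v_i^*v_i=1$ and $v_iv_i^*=p_i$. Lemma~\ref{L:index-essentialcodimension} (with $p=p_i$, $q=T_i^*T_i$) then gives $[p_i:T_i^*T_i]=[p_i-T_i^*T_i]_0$.

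The argument for $(q_i,\,T_iT_i^*)$ is symmetric: $T_iT_i^*\le w_i^*w_i=q_i$, $q_i-T_iT_i^*\in C(X_i)\otimes B$, and $q_i\sim 1_{\mathcal{M}(C(X_i)\otimes B)}$ via the isometry $w_i^*$, for which $(w_i^*)^*w_i^*=w_iw_i^*=1$ and $w_i^*(w_i^*)^*=w_i^*w_i=q_i$. Hence $[q_i:T_iT_i^*]=[q_i-T_iT_i^*]_0$, and combining the two displayed equalities with the definition of $t_i$ finishes the proof.

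I expect no serious difficulty; the one point needing care is the transfer of Lemma~\ref{L:index-essentialcodimension}, stated for $\mt$, to the multiplier algebra of the possibly non-unital algebra $C(X_i)\otimes B$, together with the compatibility of the $K_0$-identifications. When $X_i$ carries an infinite endpoint, $C_0(X_i)$ is contractible, so both essential-codimension classes vanish and the formula degenerates to the already-recorded $t_i=0$; otherwise $X_i$ is a contractible compact interval, $K_0(C(X_i)\otimes B)\cong K_0(B)$ by evaluation at any point, and one could equally well run the entire argument fibrewise in $\mt$ at a single $x\in X_i$, using that each $p_i(x)$ and $q_i(x)$ is full and properly infinite, hence Murray--von Neumann equivalent to $1_{\mt}$. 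Everything else is direct substitution.
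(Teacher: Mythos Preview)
Your proposal is correct and follows exactly the same route as the paper: the paper's proof is the single sentence ``Combine Lemma~\ref{L:index-essentialcodimension} with the definition of the index $t_i$,'' and you have simply written out what that combination means, verifying the hypotheses $T_i^*T_i\le p_i$, $p_i-T_i^*T_i\in C(X_i)\otimes B$, $p_i\sim 1$ (and the analogous ones for $q_i$) and noting the passage from $\mt$ to $\mathcal{M}(C(X_i)\otimes B)$. Your attention to the ambient algebra and the fibrewise alternative is a useful gloss that the paper leaves implicit.
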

\begin{proof}
Combine Lemma \ref{L:index-essentialcodimension} with the definition of the index $t_i$. 
\end{proof}
Now let us proceed to obtain the sufficient conditions for Murray von Neumann equivalence of $\mathbf{p,q}$. Note that $p_i(x)-T_i^*(x)T_i(x)\in B$, $q_i(x)-T_i(x)T_i^*(x)\in B$ for each $x \in X_i$ imply that $T_i^*T_i(x_i)-T_{i-1}^*(x_i)T_{i-1}(x_i)\in B$ and $T_iT_i^*(x_i)-T_{i-1}(x_i)T_{i-1}^*(x_i)\in B$. Combining this fact with Lemma \ref{L:properties}, we get the following.

\begin{equation} \label{E:(1)}
\begin{split}
[p_i(x_i):p_{i-1}(x_i)]&=[p_i(x_i):T_i^*(x_i)T_i(x_i)]+[T_i^*(x_i)T_i(x_i):T_{i-1}^*(x_i)T_{i-1}(x_i)]\\
                       &+[T_{i-1}^*(x_i)T_{i-1}(x_i):p_{i-1}(x_i)]
\end{split}
\end{equation}

\begin{equation} \label{E:(2)}
\begin{split}
[q_i(x_i):q_{i-1}(x_i)]&=[q_i(x_i):T_i(x_i)T_i^*(x_i)]+[T_i(x_i)T_i^*(x_i):T_{i-1}(x_i)T_{i-1}^*(x_i)]\\
                       &+[T_{i-1}(x_i)T_{i-1}^*(x_i):q_{i-1}(x_i)]
\end{split}
\end{equation}
Recall that $T_i=w_i^*\widetilde{u_i}v_i^*$.
Then, by evaluating at $x_i$, we have
\[
\begin{split}
\pi(T_i)&=\pi(w_i^*w_iu_iv_iv_i^*)\\
        &=\pi(q_iu_ip_i)\\
        &=\pi(q_{i-1}u_ip_i)\\
        &=\pi(q_{i-1}u_{i-1}p_i)\\
        &=\pi(q_{i-1}u_{i-1}p_{i-1})\\
        &=\pi(T_{i-1}).
\end{split}
\]
so that $T_{i}(x_i)-T_{i-1}(x_i)\in B$. Note that $T_i^*(x_i)T_i(x_i) \sim 1 $, $T_i(x_i)T_i^*(x_i) \sim 1$. From this,
\[ [T_i^*(x_i)T_i(x_i):T_{i-1}^*(x_i)T_{i-1}(x_i)]=[T_i(x_i)T_{i}^*(x_i):T_{i-1}(x_i)T_{i-1}^*(x_i)]\]
Subtracting  (\ref{E:(1)}) by (\ref{E:(2)}), we get
\[ \begin{split}
& [p_i(x_i):p_{i-1}(x_i)]-[q_i(x_i):q_{i-1}(x_i)]=[p_i(x_i):T_i^*(x_i)T_i(x_i)]-[q_i(x_i):T_i(x_i)T_i^*(x_i)] \\
&+[T_{i-1}^*(x_i)T_{i-1}(x_i):p_{i-1}(x_i)]- [T_{i-1}(x_i)T_{i-1}^*(x_i):q_{i-1}(x_i)]
\end{split}
\]
By Proposition \ref{P:index-essentialcodimension}\[k_i-l_i=t_i-t_{i-1}\] is obtained. We are going to show that this is the necessary and sufficient condition for Murray von Neumann equivalence of two projections in the corona algebra or the isomorphic equivalence of two homogeneous Hilbert module bundles modulo``finite'' sections.
\begin{prop}\label{P:von Neumann}
Suppose that two projections $\mathbf{p}, \mathbf{q}$ in the corona algebra of $C(X)\otimes B$ are represented by local liftings $(p_0,p_1,\dots,p_n)$, $(q_0,q_1,\dots,q_n)$ respectively where for each $i$ $p_i(x)$ and $q_i(x)$ are full, properly infinite projections for all $x \in X_i$. Let $k_i=[p_i(x_i):p_{i-1}(x_i)]$ and $l_i=[q_i(x_i):q_{i-1}(x_i)]$. Then $ \mathbf{p} \sim \mathbf{q}$ if and only if there are elements $t_i$'s in $K_0(B)$ such that
\begin{enumerate}
\item $k_i-l_i=t_i-t_{i-1}$
\item $t_i=0$ if $X_i$ has an infinite end point.
\end{enumerate}
\end{prop}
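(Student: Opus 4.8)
The necessity of (1) and (2) is exactly what the discussion preceding the statement establishes: from $u\in\mu$ implementing $\mathbf{p}\sim\mathbf{q}$ one builds the partial isometries $T_i=w_i^*\widetilde{u_i}v_i^*$, sets $t_i=[p_i-T_i^*T_i]_0-[q_i-T_iT_i^*]_0\in K_0(B)$, notes $t_i=0$ when $X_i$ has an infinite end point (Lemma~\ref{L:unitaryequi}), and derives $k_i-l_i=t_i-t_{i-1}$ from \eqref{E:(1)}, \eqref{E:(2)} and Proposition~\ref{P:index-essentialcodimension}. So all the work is in the converse, and the plan runs in two steps.

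\emph{Step 1: normalise the liftings so that $k_i=l_i$ for all $i$.} Given $t_i$ obeying (1) and (2), I would use Lemma~\ref{L:subprojection} to replace $(q_0,\dots,q_n)$ by a lifting of the same $\mathbf{q}$ all of whose essential codimensions equal those of $\mathbf{p}$. For each $i$ with $X_i$ having no infinite end point, choose (by Lemma~\ref{L:subprojection}) a norm-continuous projection-valued section $s_i\colon X_i\to B$ with $s_i\le q_i$ and $[s_i(x)]_0=-t_i$; for the at most two boundary intervals put $s_i=0$, which is forced because (2) says $t_i=0$ there and, in any case, a projection-valued section vanishing at infinity has trivial $K_0$-class. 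Passing from $q_i$ to $q_i-s_i$ does not change $\mathbf{q}$ in $\mathcal{Q}(I)$, and each fibre $q_i(x)-s_i(x)$ is still Murray--von Neumann equivalent to $1_{\mt}$: if $vv^*=q_i(x)$, $v^*v=1$, then $q_i(x)-s_i(x)\sim 1-v^*s_i(x)v\sim 1$ by Lemma~\ref{L:Wegge-Olsen}. Finally, the composition and additivity rules of Lemma~\ref{L:properties}, together with part~(1) of that lemma for $B$-valued projections, give the new codimension at $x_i$ as
\[
[q_i(x_i)-s_i(x_i):q_{i-1}(x_i)-s_{i-1}(x_i)]=l_i+t_i-t_{i-1}=k_i ,
\]
by (1). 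So I may assume $l_i=k_i$ for every $i$.

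\emph{Step 2: patch local partial isometries.} By homogeneity and Theorem~\ref{T:triviality} there are, for each $i$, isometry-valued sections $v_i,w_i\colon X_i\to\mt$ with $v_i^*v_i=1$, $v_iv_i^*=p_i$, $w_iw_i^*=1$, $w_i^*w_i=q_i$; then $z_i:=v_iY_iw_i$ satisfies $z_i^*z_i=q_i$, $z_iz_i^*=p_i$ for any unitary-valued section $Y_i$ on $X_i$, and I would choose the $Y_i$ inductively so that $z_i(x_i)-z_{i-1}(x_i)\in B$. With $Y_0\equiv 1$, the compatibility at $x_i$ unwinds to $Y_i(x_i)\equiv a_iY_{i-1}(x_i)\beta_i\pmod B$, where $a_i=v_i(x_i)^*v_{i-1}(x_i)$ and $\beta_i=w_{i-1}(x_i)w_i(x_i)^*$ are unitaries modulo $B$. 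By Lemma~\ref{L:BDF}, $\Ind(a_i)=[p_{i-1}(x_i):p_i(x_i)]=-k_i$ and $\Ind(\beta_i)=[q_i(x_i):q_{i-1}(x_i)]=l_i$, so $a_iY_{i-1}(x_i)\beta_i$ is a unitary modulo $B$ whose index is $l_i-k_i=0$; hence it is the image of a genuine unitary $U_i\in\mt$ (an index-zero Fredholm operator is a compact perturbation of an invertible, \cite{Mi}), and one may take $Y_i\equiv U_i$. A short computation with $v_i^*v_i=1$, $w_iw_i^*=1$ and $p_i(x_i)\equiv p_{i-1}(x_i)$, $q_i(x_i)\equiv q_{i-1}(x_i)\pmod B$ then gives $z_i(x_i)\equiv z_{i-1}(x_i)\pmod B$. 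Hence $(z_0,\dots,z_n)$ is the local representation of an element $\mathbf{z}\in\mathcal{Q}(I)$ with $\mathbf{z}^*\mathbf{z}=\mathbf{q}$, $\mathbf{z}\mathbf{z}^*=\mathbf{p}$, so $\mathbf{p}\sim\mathbf{q}$. (When $X=\mathbb{T}$ there is an extra partition point $x_0$; one reads (1) cyclically and, at the last step, chooses $Y_n$ on $X_n$ meeting the prescribed unitary values at both $x_n$ and $x_0$ --- possible since both are honest unitaries and $U(\mt)$ is path connected.)

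The main obstacle is the bookkeeping that matches the $K$-theoretic data to the geometry in these two steps: identifying the gluing obstruction at $x_i$ as a unitary modulo $B$ of index $l_i-k_i$ --- where Lemma~\ref{L:BDF} does the essential work --- and checking that Lemma~\ref{L:subprojection} can absorb precisely the discrepancies $k_i-l_i$ under the boundary constraint (2), i.e.\ that the telescoping relation $k_i-l_i=t_i-t_{i-1}$ with $t_i=0$ at infinite ends is exactly what the allowable ``finite'' modifications realise. The remaining points --- that the modified fibres stay homogeneous, that index-zero Fredholm unitaries lift, and that $U(\mt)$ is connected --- are routine.
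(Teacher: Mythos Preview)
Your argument is correct and follows the paper's two-step plan: first normalise the liftings so that $k_i=l_i$ via Lemma~\ref{L:subprojection}, then build compatible local partial isometries using Theorem~\ref{T:triviality} and an index computation. The executions differ in detail. In Step~1 you subtract from $q_i$ a subprojection of class $-t_i$, whereas the paper subtracts from $p_i$ one of class $t_i-t_{i-1}$; both achieve the same normalisation, and your bookkeeping is in fact the cleaner of the two. In Step~2 the paper first picks arbitrary $u_i$ with $u_i^*u_i=p_i$, $u_iu_i^*=q_i$ and then modifies $u_i$ near $x_i$ by finding a compact perturbation $v_i$ of $q_i(x_i)u_{i-1}(x_i)p_i(x_i)$ and connecting $v_i$ to $u_i$ along a short subinterval via the argument of \cite[Proposition~3.14]{Lee2}. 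You instead factor $z_i=v_iY_iw_i$ through the trivialising isometries and absorb the gluing condition into the choice of the unitary $Y_i$, lifting an index-zero essential unitary via \cite{Mi}; this avoids the external connecting-path reference, and path-connectedness of $U(\mt)$ is needed only for the circle. Both routes identify the gluing obstruction at $x_i$ as the Fredholm index $l_i-k_i$ and use the same fact that an index-zero Fredholm operator is a compact perturbation of an invertible, so the difference is one of packaging rather than substance.
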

\begin{proof}
We already have shown one direction. For the other direction, suppose we are given $t_i$'s such that $k_i-l_i=t_i-t_{i-1}$.
Then for each $i$ we can produce a norm continuous projection valued function $r_i:X_i \to B$ such that $r_i \leq p_i$ and $[r_i]_0=t_i-t_{i-1}$ by Lemma \ref{L:subprojection}.  Let $p_i'=p_i-r_i$. Using Lemma \ref{L:properties} it follows that
$[p_i'(x_i):p_{i-1}(x_i)]=[p_i(x_i):p_{i-1}(x_i)]-[r_i]_0=k_i+t_{i-1}-t_i=l_i=[q_i(x_i):q_{i-1}(x_i)]$. We also check that $p_i'(x)$ are full, properly infinite projection for each $x\in X_i$ as in the proof of Lemma \ref{L:index-essentialcodimension}. Thus we can take a local lifting $(p_0',p_1',\dots,p_n')$ for $\mathbf{p}$ such that $k_i=l_i$. So we may assume that local liftings $(p_0,\dots,p_n)$ and $(q_0,\dots,q_n)$ for $\mathbf{p}$ and $\mathbf{q}$ satisfy the condition $k_i=l_i$.

Note that there is a path $u_i:X_i \to M(B)_s$ such that $u_i^*u_i=p_i$ and $u_iu_i^*=q_i$ since $p_i$ and $q_i$ are Murray von Neumann equivalent on $X_i$ for each $i$. Thus it is enough to show that $u_i(x_i)-u_{i-1}(x_i) \in B$. Note that $u_{i-1}(x)$ is a unitary from $p_{i-1}(x)H_B$ onto $q_{i-1}(x)H_B$ so that $\Ind(u_{i-1}(x_i))=0$.  Then $k_i=l_i$ implies that
    $$\Ind(q_i(x_i)u_{i-1}(x_i)p_i(x_i))=
   -l_i+\Ind(u_{i-1}(x_i))+k_i=0,$$
    where the first index is for maps from $p_i(x_i)H_B$ to $q_i(x_i)H_B$, and, for example, the index of $p_{i-1}(x_i)p_i(x_i)$ as a map from $p_i(x_i)H$ to $p_{i-1}(x_i)H$ is $k_i$.
    Also
    $$q_i(x_i)u_{i-1}(x_i)p_i(x_i)-u_{i-1}(x_i) \in B.$$  There is  a compact perturbation $v_i$ of $q_i(x_i)u_{i-1}(x_i)p_i(x_i)$ such that $v_i^*v_i=p_i(x_i)$, $v_iv_i^*=q_{i}(x_i)$, and $v_i-u_{i-1}(x_i)\in B$. Then we use the same argument in the proof of Proposition 3.14  to connect $v_i$ to $u_i(x)$ for some $x \in (x_i,x_{i+1})$. Then using this path we can arrange a strictly continuous function $w_i$ on $[x_i,x]$ such that $w_i^*w_i=p_i, w_iw_i^*=q_i$ and $w_i(x_i)-u_{i-1}(x_i)\in B$. Finally we paste two maps $w_i$ and $u_i$ at $x$ to generate $u_i'$.  In the $(-\infty,\infty)$-case we do the above for $i=1,\dots,n$ and let $u'_0=u_0$. In the circle case we do it for $i=0,\dots,n$.
\end{proof}

\begin{rem}\label{R:Cuntz}
\begin{itemize}
\item[(i)] In fact,  if there is no endpoints restriction then the condition i) in Proposition \ref{P:von Neumann} is automatically satisfied. Put $t_0$ any element in $K_0(B)$. Then let $t_i=k_i-l_i+t_{i-1}$ inductively. Thus $\mathbf{p} \sim \mathbf{q}$ is always satisfied so that the associated bundles are isomorphic. 
\item[(ii)] In the case of $X=(-\infty,\infty)$, we have the restrictions on $t_n=t_0=0$. But this implies that $\sum_{i=1}^n k_i- \sum_{i=1}^n l_i=t_n-t_0=0$ since the right hand side is the telescoping sum. Thus $\sum_{i=1}^n k_i=\sum_{i=1}^n l_i$. Then again $\mathbf{p} \sim \mathbf{q}$ is always satisfied by \cite[Proposition 3.14]{Lee2}.
\end{itemize} 
\end{rem}
It remains to obtain the conditions for homotopy equivalence of $\mathbf{p},\mathbf{q}$. Let us begin with some preparations. 

\begin{rem}
Without halving conditions, we need to put the condition that the essential codimension vanishes. In fact, for the projections $p,q$ in $B(H)$ such that $p-q \in K$ there is a unitary $u$ in $1+B(H)$ such that $upu^*=q$ if and only if $[p:q]=0$. Though we do not know whether this fact holds for any stable $C\sp*$-algebra $B$ and $\mt$, there is an affirmative answer for $\sigma$-unital purely infinite simple $C\sp*$-algebras such that $K_1(B)=0$ (see \cite[Theorem 2.14]{Lee}). And it has been crucial in KK-theory that the implementing unitary is of the form `identity + compact' \cite{Da,DE,DE1,Lee, Lee13}.
\end{rem}
We need the following lemma to prove the next theorem.
\begin{lem}
 Given an element $\mathbf{f} \in \mathcal{Q}(C(X)\otimes B)$ represented by  $(f_0,\cdots,f_n)$ we can find a representative $\widetilde{f} \in \mu$ of $\mathbf{f}$ such that $\widetilde{f}$ is continuous at each partition point.
 \end{lem}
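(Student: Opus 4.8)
The plan is to assemble $\widetilde f$ by glueing together suitably corrected copies of the pieces $f_0,\dots,f_n$ into a single bounded strictly continuous function on $X$. The point is that the ``jumps'' $f_i(x_i)-f_{i-1}(x_i)$ lie in $B$, hence can be absorbed into norm continuous $B$-valued corrections, and such corrections do not change the coset by \cite[Lemma 3.1]{Lee}; forcing consecutive corrected pieces to agree exactly at the partition points is precisely the asserted continuity.

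I would proceed by induction on the intervals, from left to right. Put $g_0:=f_0$. Assuming $g_{i-1}\in C_b(X_{i-1},M(B)_s)$ has been constructed with $g_{i-1}-f_{i-1}\in C(X_{i-1})\otimes B$, vanishing in norm at the infinite endpoint of $X_{i-1}$ if it has one, set $e_i:=g_{i-1}(x_i)-f_i(x_i)$. Then $e_i\in B$: indeed $g_{i-1}(x_i)-f_{i-1}(x_i)\in B$ by the inductive hypothesis and $f_{i-1}(x_i)-f_i(x_i)\in B$ by hypothesis on the representation. Choose $\chi_i\in C(X_i,[0,1])$ with $\chi_i(x_i)=1$, taking $\chi_i\equiv1$ when $X_i$ is compact and a compactly supported bump when $X_i$ has an infinite endpoint, and set $g_i:=f_i+\chi_i e_i$. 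Then $g_i-f_i=\chi_i e_i\in C(X_i)\otimes B$ vanishes in norm at the infinite endpoint of $X_i$ if any; $g_i$ is bounded and strictly continuous because $f_i$ is and $\chi_i e_i$ is norm continuous; and $g_i(x_i)=f_i(x_i)+e_i=g_{i-1}(x_i)$. Glueing $g_0,\dots,g_n$ across the shared points $x_1,\dots,x_n$ (pasting along a finite closed cover) produces a bounded strictly continuous $\widetilde f\colon X\to M(B)$, i.e. $\widetilde f\in\mathcal M(I)=\mu$, with $\widetilde f|_{X_i}-f_i\in C(X_i)\otimes B$ vanishing in norm at the infinite endpoints; by \cite[Lemma 3.1]{Lee} this $\widetilde f$ represents $\mathbf f$, and by construction it is continuous at each $x_i$. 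For $X=[0,1]$, $[0,\infty)$ or $(-\infty,\infty)$ this completes the argument.

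For $X=\mathbb T$ one extra step is needed. Running the induction over $X_0,\dots,X_n$ gives a function $g$ on the circle slit open at $x_0$, continuous except possibly for a jump $c:=g_n(x_0)-g_0(x_0)$ at $x_0$; here $c\in B$ because $f_n(x_0)-f_0(x_0)\in B$ by the wrap-around condition and all corrections introduced are $B$-valued. Subtracting from $g$ the $B$-valued continuous function $\delta$ interpolating linearly, in the scalar parameter of the slit interval, from $0$ at one copy of $x_0$ to $c$ at the other removes this remaining jump without leaving the coset, and $\widetilde f:=g-\delta$ is the desired representative.

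The construction is elementary and I do not expect a genuine obstacle; the only points that need attention are (i) choosing the corrections to be $B$-valued and to vanish in norm at the infinite endpoints, so that \cite[Lemma 3.1]{Lee} still identifies $\widetilde f$ with $\mathbf f$, and (ii) disposing of the single additional compact jump arising from the wrap-around in the circle case.
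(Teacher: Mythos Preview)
Your argument is correct and complete. It differs from the paper's proof in method, though not in spirit. The paper writes down a single explicit formula: for each jump it builds a piecewise-linear ``tent'' function $m_i$ supported on $[x_{i-1},x_{i+1}]$ with $m_i(x_i)=f_i(x_i)-f_{i-1}(x_i)$, and then sets $\widetilde f=f_i-\tfrac12 m_i+\tfrac12 m_{i+1}$ on $X_i$, so that the two one-sided values at $x_i$ both become the midpoint $\tfrac12(f_{i-1}(x_i)+f_i(x_i))$. Your approach instead walks left to right, pushing the entire accumulated discrepancy onto the next interval via a bump function, and then, in the circle case, kills the single residual wrap-around jump with one further $B$-valued correction. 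The paper's version is symmetric and formulaic; yours is inductive and slightly more flexible (arbitrary bumps rather than linear hats), and it treats the circle case more explicitly---indeed the paper's formula, with $m_0=m_{n+1}=0$, leaves the point $x_0=0=1$ untouched, so your separate handling of the wrap-around is a genuine improvement in clarity there. Either way the essential content is the same: the jumps lie in $B$, so norm-continuous $B$-valued corrections erase them without changing the coset.
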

\begin{proof}
 If $X$ is compact, then we set $x_0=0$, $x_{n+1}=1$. Otherwise, we set $x_0=x_1-1$ when $X$ contains $-\infty$, and $x_{n+1}=x_n +1$ when $X$ contaions $+\infty$.
Then we define a function in $C(X)\otimes B$ by
  \begin{equation*}
m_i(x)=
\begin{cases}
\frac{x-x_{i-1}}{x_i-x_{i-1}}(f_i(x_i) -f_{i-1}(x_i)), &\text{if $x_{i-1}\leq x \leq x_i$}\\
\frac{x-x_{i+1}}{x_i-x_{i+1}}(f_i(x_i) -f_{i-1}(x_i)), &\text{if $x_{i}\leq x \leq x_{i+1}$}\\
0,                               &\text{otherwise}
\end{cases}
\end{equation*}
for each $i=1,\cdots n$. In addition, we set $m_0=m_{n+1}=0$.
Then we define a function $\widetilde{f}$ from $f_i$'s by
\[\widetilde{f}(x)=f_i(x)-m_i(x)/2+m_{i+1}(x)/2\] on each $X_i$.
It follows that $f_i(x_i)-m_i(x_i)/2+m_{i+1}(x_i)/2=f_{i-1}(x_i)-m_{i-1}(x_i)/2+m_{i}(x_i)/2$. Thus $\widetilde{f}$ is well defined.
 \end{proof}
 \begin{prop}
 Suppose that two projections $\mathbf{p}, \mathbf{q}$ in the corona algebra of $C(X)\otimes B$ are represented by local liftings $(p_0,p_1,\dots,p_n)$, $(q_0,q_1,\dots,q_n)$ respectively where for each $i$ $p_i(x)$ and $q_i(x)$ are halving projections for all $x \in X_i$. Let $k_i=[p_i(x_i):p_{i-1}(x_i)]$ and $l_i=[q_i(x_i):q_{i-1}(x_i)]$. Then $ \mathbf{p} \sim_u \mathbf{q}$ if and only if there are elements $t_i$'s and $s_i$'s in $K_0(B)$ such that
\begin{enumerate}
\item $k_i-l_i=t_i-t_{i-1}$,
\item $s_i+t_i=s_{i-1}+t_{i-1}$,
\item $t_i$ and $s_i$ are zero if $X_i$ has an infinite end point.
\end{enumerate}
 \end{prop}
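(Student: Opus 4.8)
The plan is to reduce the unitary equivalence $\mathbf{p}\sim_u\mathbf{q}$ to a pair of Murray--von Neumann equivalences, one for the projections themselves and one for their complements, and then to invoke Proposition \ref{P:von Neumann} twice. The elementary input is that in any unital $C^*$-algebra two projections $r,s$ satisfy $r\sim_u s$ if and only if $r\sim s$ and $1-r\sim 1-s$: a unitary conjugating $r$ to $s$ restricts to partial isometries implementing $r\sim s$ and $1-r\sim 1-s$, and conversely, if $v^*v=r$, $vv^*=s$, $w^*w=1-r$, $ww^*=1-s$, then $v+w$ is a unitary conjugating $r$ to $s$. Applying this in $\mathcal{Q}(C(X)\otimes B)$, we see that $\mathbf{p}\sim_u\mathbf{q}$ if and only if $\mathbf{p}\sim\mathbf{q}$ and $1-\mathbf{p}\sim 1-\mathbf{q}$.

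Next I would record the data for the complementary projections. The tuple $(1-p_0,\dots,1-p_n)$ is a local lifting of $1-\mathbf{p}$: its discontinuities are $(1-p_i)(x_i)-(1-p_{i-1})(x_i)=p_{i-1}(x_i)-p_i(x_i)\in B$, and each fiber $1-p_i(x)$ is again halving, hence in particular full and properly infinite; the same applies to $1-\mathbf{q}$. Using Lemma \ref{L:properties}(4) with the sensible identity $p_i(x_i)+(1-p_i)(x_i)=1=p_{i-1}(x_i)+(1-p_{i-1})(x_i)$ and $[1:1]=0$, the essential codimension of $1-\mathbf{p}$ at $x_i$ is $[(1-p_i)(x_i):(1-p_{i-1})(x_i)]=-k_i$, and that of $1-\mathbf{q}$ is $-l_i$.

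With this in place both directions are bookkeeping. For the forward implication, Proposition \ref{P:von Neumann} applied to $\mathbf{p}\sim\mathbf{q}$ yields $t_i\in K_0(B)$ with $k_i-l_i=t_i-t_{i-1}$ and $t_i=0$ whenever $X_i$ has an infinite endpoint, and applied to $1-\mathbf{p}\sim 1-\mathbf{q}$ (legitimate since the fibers are full and properly infinite) it yields $s_i\in K_0(B)$ with $l_i-k_i=s_i-s_{i-1}$ and $s_i=0$ at infinite endpoints; adding the two relations gives condition (2), while (1) and (3) are already in hand. For the converse, given $t_i,s_i$ satisfying (1)--(3), condition (1) together with the vanishing of $t_i$ at infinite endpoints is precisely the hypothesis of Proposition \ref{P:von Neumann} for $\mathbf{p},\mathbf{q}$, so $\mathbf{p}\sim\mathbf{q}$; subtracting (1) from (2) gives $s_i-s_{i-1}=l_i-k_i=(-k_i)-(-l_i)$, which with the vanishing of $s_i$ at infinite endpoints is the hypothesis of Proposition \ref{P:von Neumann} for $1-\mathbf{p},1-\mathbf{q}$, so $1-\mathbf{p}\sim 1-\mathbf{q}$; hence $\mathbf{p}\sim_u\mathbf{q}$. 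The substantive content is already carried by Proposition \ref{P:von Neumann} (and, behind it, Theorem \ref{T:triviality} and Lemma \ref{L:subprojection}); the only genuinely new points are the essential-codimension computation for the complements and the index arithmetic linking $(t_i)$ with $(s_i)$. I do not anticipate a serious obstacle; the one thing to be careful about is that the hypothesis has been strengthened from ``full and properly infinite'' to ``halving'' precisely so that Proposition \ref{P:von Neumann} can also be applied to $1-\mathbf{p}$ and $1-\mathbf{q}$, whose fibers are then again full and properly infinite.
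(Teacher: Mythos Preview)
Your proposal is correct and follows essentially the same route as the paper: reduce $\mathbf{p}\sim_u\mathbf{q}$ to the pair of Murray--von Neumann equivalences $\mathbf{p}\sim\mathbf{q}$ and $1-\mathbf{p}\sim 1-\mathbf{q}$, observe that the essential codimensions of the complements are $-k_i$ and $-l_i$, and invoke Proposition~\ref{P:von Neumann} twice. The only cosmetic difference is that for the converse the paper explicitly builds local liftings $\mathbf{v},\mathbf{w}$ (via the construction inside the proof of Proposition~\ref{P:von Neumann}) and sets $\mathbf{u}=\mathbf{v}+\mathbf{w}$, whereas you appeal directly to the general fact $r\sim_u s \Leftrightarrow r\sim s$ and $1-r\sim 1-s$ in the corona algebra; the content is the same.
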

 \begin{proof}
 For one direction, suppose $\mathbf{p} \sim_u \mathbf{q}$. Then $\mathbf{p} \sim \mathbf{q}$ and $\mathbf{1-p} \sim \mathbf{1-q}$. By applying Proposition \ref{P:von Neumann} to $\mathbf{p},\mathbf{q}$, we obtain $t_i$'s in $K_0(B)$ such that $k_i-l_i=t_i-t_{i-1}$. Similarly, by applying Proposition \ref{P:von Neumann} to $\mathbf{1-p}, \mathbf{1-q}$, we obtain $s_i$'s such that $s_i-s_{i-1}=l_i-k_i$ since $[1-p_i(x_i):1-p_{i-1}(x_i)]=-k_i$ and  $[1-q_i(x_i):1-q_{i-1}(x_i)]=-l_i$.
   
 Now given $t_i$'s and $s_i$'s in the assumption using the same argument in Proposition \ref{P:von Neumann} we can construct local liftings $\mathbf{v}=(v_0,\dots,v_n)$, $\mathbf{w}=(w_0,\dots,w_n)$ such that $v_i^*v_i=p_i$, $v_iv_i^*=q_i$, $w_i^*w_i=1-p_i$, $w_iw_i^*=1-q_i$. Now consider $\mathbf{u}=\mathbf{v}+\mathbf{w}$.     
 \end{proof}

 \begin{rem}
\begin{enumerate}
  \item Again, if there is no endpoint restrictions, we can construct $t_i$'s inductively so that the conditions $k_i-l_i=t_i-t_{i-1}$ are satisfied. Similary $s_i$'s such that $l_i-k_i=s_i-s_{i-1}$. Hence any two homogeneous projections are unitary equivalent.
 \item In the case of $X=(-\infty, \infty)$, we have the restrictions on $s_n=t_n=s_0=t_0=0$. It follows that $\sum k_i=\sum l_i$. Then $\mathbf{p} \sim \mathbf{q}$ and $1-\mathbf{p} \sim 1-\mathbf{q}$ by \cite[Corollary 3.15]{Lee2} since $p_i(x), q_i(x)$ are halving projections for each $x\in X_i$.
\end{enumerate}   
 \end{rem}
A (non-unital) $C\sp*$-algebra $B$ satisfies the good index theory if whenever $B$ is embedded as an ideal of $E$ and $u$ is a unitary of $E/B$ such that $\partial_1([u]_1)=0$, where $\partial_1:K_1[(E/B)\to K_0(B)$, then there is a unitary in $E$ which lifts $u$. 
\begin{prop}\label{P:homotopy}
Suppose that $C(X)\otimes B$ satisfies the good index theory. Under the same assumtions on $\mathbf{p}$ and $\mathbf{q}$, $\mathbf{p} \sim_{h} \mathbf{q}$ if and only if 
\begin{enumerate}
\item $k_i-l_i=t_i-t_{i-1}$,
\item $s_i+t_i=s_{i-1}+t_{i-1}=0$,
\item $t_i$ and $s_i$ are zero if $X_i$ has an infinite end point.
\end{enumerate}
\end{prop}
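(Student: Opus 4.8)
The plan is to run the analysis of the $\sim_u$ case but keeping track of the connected component of the implementing unitary, and to extract the extra constraint $s_i+t_i=0$ from its index. I would use the standard fact that in a unital $C^*$-algebra two projections are homotopic exactly when they are conjugate by a unitary in the connected component $U_0$ of the identity, so that $\mathbf{p}\sim_h\mathbf{q}$ if and only if there is $\mathbf{U}\in U_0(\mathcal{Q}(C(X)\otimes B))$ with $\mathbf{U}\mathbf{p}\mathbf{U}^*=\mathbf{q}$. For such a $\mathbf{U}$ I would set $\mathbf{V}=\mathbf{q}\mathbf{U}\mathbf{p}$ and $\mathbf{W}=(1-\mathbf{q})\mathbf{U}(1-\mathbf{p})$, which are partial isometries from $\mathbf{p}$ to $\mathbf{q}$ and from $1-\mathbf{p}$ to $1-\mathbf{q}$; applying Proposition~\ref{P:von Neumann} to each (and to its complement) already produces $t_i$'s with $k_i-l_i=t_i-t_{i-1}$, $s_i$'s with $l_i-k_i=s_i-s_{i-1}$, the vanishing of $t_i,s_i$ at infinite endpoints via Lemma~\ref{L:unitaryequi}, and hence $s_i+t_i=s_{i-1}+t_{i-1}$. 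So conditions (1), (3) and the constancy of $s_i+t_i$ come for free; the whole point of the proposition is that $\mathbf{U}$ can be chosen in $U_0$ precisely when that common value is $0$.

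For the direction $\mathbf{p}\sim_h\mathbf{q}\Rightarrow(1)$--$(3)$, I would fix $\mathbf{U}\in U_0(\mathcal{Q}(C(X)\otimes B))$ as above and any lift $\mathbf{u}\in\mu$ of $\mathbf{U}$, so that $1-\mathbf{u}^*\mathbf{u},\,1-\mathbf{u}\mathbf{u}^*\in C(X)\otimes B$ and $\mathbf{u}$ is a Fredholm element. Decomposing $\mathbf{u}|_{X_i}$ with respect to $H_B=p_iH_B\oplus(1-p_i)H_B$ in the source and $H_B=q_iH_B\oplus(1-q_i)H_B$ in the target, the off-diagonal blocks $q_i\mathbf{u}|_{X_i}(1-p_i)$ and $(1-q_i)\mathbf{u}|_{X_i}p_i$ lie in $C(X_i)\otimes B$ because $\pi(\mathbf{u})\mathbf{p}\pi(\mathbf{u})^*=\mathbf{q}$; hence $\Ind(\mathbf{u}|_{X_i})=\Ind(q_i\mathbf{u}|_{X_i}p_i)+\Ind((1-q_i)\mathbf{u}|_{X_i}(1-p_i))=t_i+s_i$, the two summands being exactly the indices produced by Proposition~\ref{P:von Neumann} (compare Lemma~\ref{L:BDF} and Proposition~\ref{P:index-essentialcodimension}). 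On the other hand $\Ind(\mathbf{u}|_{X_i})$ is the image of $\Ind(\mathbf{u})=\partial_1([\mathbf{U}]_1)$ under restriction $K_0(C(X)\otimes B)\to K_0(C(X_i)\otimes B)$, and $[\mathbf{U}]_1=0$ since $\mathbf{U}\in U_0$. Therefore $t_i+s_i=0$ for every $i$, which, together with the identity $k_i-l_i=t_i-t_{i-1}$ established before Proposition~\ref{P:von Neumann} and the endpoint vanishing, is exactly (1)--(3).

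For the converse I would, given $t_i$'s and $s_i$'s satisfying (1)--(3), run the construction in the proof of Proposition~\ref{P:von Neumann} (and its analogue for $1-\mathbf{p}$) to build partial isometries $\mathbf{V}\colon\mathbf{p}\to\mathbf{q}$ and $\mathbf{W}\colon1-\mathbf{p}\to1-\mathbf{q}$ with the prescribed indices, gluing trivially across the partition points, and put $\mathbf{U}=\mathbf{V}+\mathbf{W}$, a unitary of $\mathcal{Q}(C(X)\otimes B)$ with $\mathbf{U}\mathbf{p}\mathbf{U}^*=\mathbf{q}$. Reversing the index computation, condition (2) forces $\Ind(\mathbf{u}|_{X_i})=0$ on every piece, and with (3) and the triviality of the gluings this gives $\partial_1([\mathbf{U}]_1)=0$ in $K_0(C(X)\otimes B)$. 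Since $C(X)\otimes B$ satisfies the good index theory, $\mathbf{U}$ then lifts to a unitary $\widetilde{\mathbf{U}}\in\mu$; and because $B$ is stable and $X$ is an interval, a half-line, a line or the circle, $\mu$ is $K$-trivial and properly infinite, so its unitary group is connected, whence $\widetilde{\mathbf{U}}\in U_0(\mu)$ and $\mathbf{U}=\pi(\widetilde{\mathbf{U}})\in U_0(\mathcal{Q}(C(X)\otimes B))$. A path from $1$ to $\mathbf{U}$ then conjugates $\mathbf{p}$ to $\mathbf{q}$ through projections, so $\mathbf{p}\sim_h\mathbf{q}$.

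The main obstacle is the bookkeeping around $t_i+s_i=\Ind(\mathbf{u}|_{X_i})$ and its passage to $\partial_1([\mathbf{U}]_1)$: one has to verify that the discontinuities $u_i(x_i)-u_{i-1}(x_i)$ at the partition points contribute nothing beyond $s_i+t_i$ — for $X=\mathbb{T}$ this means checking that the $K_1(B)$-component of $\partial_1([\mathbf{U}]_1)$ can be killed by the choice of gluings, and for $X=[0,\infty)$ or $(-\infty,\infty)$ that the endpoint conditions (3) make the argument go through with the correspondingly smaller $K$-groups. The good index theory hypothesis enters only in the converse, where it is precisely what upgrades the vanishing of $\partial_1([\mathbf{U}]_1)$ to an honest lift of $\mathbf{U}$ to the multiplier algebra, and hence to a homotopy.
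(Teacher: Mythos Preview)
Your proposal is correct and follows essentially the same route as the paper: block-decompose the implementing unitary as $\mathbf{V}+\mathbf{W}$, identify $t_i+s_i$ with the Fredholm index of the restriction to $X_i$, and for the converse build $\mathbf{U}=\mathbf{V}+\mathbf{W}$ from Proposition~\ref{P:von Neumann}, show $\partial_1([\mathbf{U}]_1)=0$, lift via good index theory, and use connectedness of $U(\mu)$.

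The one noteworthy difference is in the forward implication. You argue directly that $[\mathbf{U}]_1=0$ (since $\mathbf{U}\in U_0$) forces $\partial_1([\mathbf{U}]_1)=0$, hence any Fredholm lift has vanishing index on each $X_i$, so $t_i+s_i=0$; in particular your forward direction does not use the good-index-theory hypothesis. The paper instead invokes good index theory already in the forward direction to lift $\mathbf{U}$ to an honest unitary $u\in\mu$, and then reads off $\Ind(u|_{X_i})=0$ from $u$ being unitary. Your variant is slightly more economical here. Your closing caveat about the $K_1(B)$-component of $\partial_1$ in the circle case is a fair observation; the paper's argument passes through the same evaluation map $K_0(C(X)\otimes B)\to K_0(B)$ and leaves that point equally implicit.
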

\begin{proof}
If $\mathbf{u}$ is the implementing unitary such that $\mathbf{u}\mathbf{p}\mathbf{u}^*=\mathbf{q}$, then the good index theory implies that there is a unitary $u $ in the the pathcomponet of $1_{\mu}$ such that $\pi(u)=\mathbf{u}$. Recall that $t_i$ is the index of $u_i=q_iu|_{X_i}p_i$ or the index of $T_i$ which is constant throughout $X_i$ where $\pi(u^*u)=\mathbf{p}$ and $\pi(uu^*)=\mathbf{q}$. Similarly, $s_i$ is the index of $v_i=(1-q_i)v|_{X_i}(1-p_i)$ where $\pi(v^*v)=\mathbf{1-p}$ and $\pi(vv^*)=\mathbf{1-q}$. So if we take $u_i=u|_{X_i}p_i$ and $v_i=u|_{X_i}(1-p_i)$, then
\[
\begin{split}
t_i+s_i=&\Ind(q_iu_ip_i)+\Ind((1-q_i)u_i(1-p_i))\\
           =&\Ind(u|_{X_i})=0
\end{split}
\]
For the converse direction, we may assume that $K_1(\mathcal{Q}(C(X)\otimes B))$ does not vanish. Then as in the proof of Proposition \ref{P:von Neumann}, we can construct $\mathbf{v}=(v_0,u_v,\dots,v_n)$, $\mathbf{w}=(w_0,w_1,\dots,w_n)$ such that $\mathbf{v}\mathbf{v}^*=\mathbf{q}, \mathbf{v}^*\mathbf{v}=\mathbf{p}$ and $\mathbf{w}^*\mathbf{w}=\mathbf{1-p}$, $\mathbf{w}\mathbf{w}^*=\mathbf{1-q}$. Note that $v_i$ and $w_i$ define the indices $t_i$ and $s_i$ respectively. Thus $\mathbf{u}=\mathbf{v}+\mathbf{w}$ is a unitary and $[\mathbf{u}]_1=\Ind(v_i+w_i)=t_i+s_i=0$ via the map $K_1(\mathcal{Q}(C(X)\otimes B)) \to K_0(C(X)\otimes B) \to K_0(B)$, where the latter is from evaluation at a point. Thus the good index theory implies that there is a unitary $u \in \mu$ which lifts $\mathbf{u}$. This fact and the pathconnectedness of $\mu$ imply that $\mathbf{u}$ is also connected to the identity.  
\end{proof}

The following corollary can be obtained  from \cite[Corollary 4.13]{BL}. We recapture it as an illustration of how to use the above proposition since the proof cannot be copied.
\begin{cor}
Let $B=K$ be the algebra of compact operators on a separable Hilbert space. Let $\mathbf{p}$, $\mathbf{q}$ define infinite dimensional Hilbert space bundles. Then $\mathbf{p} \sim_{h} \mathbf{q}$ if and only if 
\begin{enumerate}
\item $k_i-l_i=t_i-t_{i-1}$,
\item $s_i+t_i=s_{i-1}+t_{i-1}=0$,
\item $t_i$ and $s_i$ are zero if $X_i$ has an infinite end point.
\end{enumerate} 
\end{cor}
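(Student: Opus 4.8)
The plan is to obtain the statement as a direct application of Proposition \ref{P:homotopy} with $B=K$; the substance is therefore to check that the two standing hypotheses of that proposition hold in this setting, namely that the local liftings consist of halving projections and that $C(X)\otimes K$ satisfies the good index theory. Once these are in place, the essential codimensions $k_i=[p_i(x_i):p_{i-1}(x_i)]$ and $l_i=[q_i(x_i):q_{i-1}(x_i)]$ take values in $K_0(K)\cong\mathbb{Z}$, and conditions (1)--(3) are literally those of Proposition \ref{P:homotopy}, so the corollary follows.

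The halving hypothesis is immediate. Identifying $\mathcal{M}(K)$ with $B(H)$, the assumption that $\mathbf{p}$ and $\mathbf{q}$ define infinite dimensional Hilbert space bundles means that each fibre $p_i(x)H$ is of infinite dimension and infinite codimension, so both $p_i(x)$ and $1-p_i(x)$ are Murray--von Neumann equivalent to $1_{B(H)}$; that is, $p_i(x)$ (and likewise $q_i(x)$) is halving for every $x\in X_i$ and every $i$. Hence the standing assumptions of Proposition \ref{P:von Neumann} and Proposition \ref{P:homotopy} are met.

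The real point is thus to verify that $C(X)\otimes K$ satisfies the good index theory for each admissible $X$. I would reduce this to the classical statement that $K$ itself has the good index theory: a unitary in a quotient $E/K$ lifts to a unitary of $E$ exactly when its image under $\partial_1\colon K_1(E/K)\to K_0(K)$ vanishes, which is the Atkinson--Brown--Douglas--Fillmore picture of the Fredholm index used already in Lemma \ref{L:BDF}. To promote this to $C(X)\otimes K$ one uses that $X$ has covering dimension one: given $0\to C(X)\otimes K\to E\to E/(C(X)\otimes K)\to 0$ and a unitary $\mathbf{u}$ of the quotient with $\partial_1[\mathbf{u}]_1=0$, one first lifts $\mathbf{u}$ to a partial isometry, records its deficiency from unitarity by a norm continuous (``finite'') section as in Section 2, corrects it fibrewise over each closed subinterval $X_i$ by the $K$-level lifting, and then glues the local corrections at the partition points --- the obstruction to gluing being a $K_1$-class at a point that is annihilated precisely by $\partial_1[\mathbf{u}]_1=0$, and the behaviour at infinite endpoints being handled exactly as in the proof of Proposition \ref{P:von Neumann}. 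Organizing this patching over the partition compatibly with the boundary conditions and with $[\mathbf{u}]_1$ is the step I expect to be the main obstacle, and it is the one place the argument genuinely uses $\dim X=1$; it is also the reason the explicit homotopies of \cite{BL} cannot simply be transcribed, since the present route replaces them by the lifting theorem. With the good index theory established, Proposition \ref{P:homotopy} yields the corollary verbatim.
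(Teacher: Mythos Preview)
Your overall plan---reduce to Proposition \ref{P:homotopy} by checking (a) that the local liftings consist of halving projections and (b) that $C(X)\otimes K$ satisfies the good index theory---is exactly the paper's route, and your verification of (a) is fine. The difference lies entirely in how (b) is handled.

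The paper dispatches (b) in one line: $C(X)\otimes K$ has stable rank one, and any $C^\ast$-algebra of stable rank one satisfies the good index theory. Both facts are standard (the first because $\tsr(C(X))\le 2$ for one-dimensional $X$ and tensoring with $K$ drops stable rank to one; the second is a well-known lifting property for $\tsr=1$ ideals). Your proposal instead tries to prove the good index theory for $C(X)\otimes K$ directly, by pushing the fibrewise/partition machinery of the paper into the lifting problem. This is unnecessarily hard, and as written it has a genuine gap: the good index theory concerns an \emph{arbitrary} embedding $0\to C(X)\otimes K\to E\to E/(C(X)\otimes K)\to 0$, not just the multiplier extension, and for a general $E$ there is no reason the elements of $E$ or of $E/(C(X)\otimes K)$ admit any description as sections over $X$ or restrictions to subintervals $X_i$. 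The ``fibrewise correction and gluing at partition points'' you outline presupposes exactly the $C_b(X,\mathcal{M}(K)_s)$ picture that is special to $E=\mathcal{M}(C(X)\otimes K)$. One could argue that only this special case is actually used in Proposition \ref{P:homotopy}, so a weaker statement suffices; but even then your sketch is substantially more work than invoking $\tsr=1$, and the ``obstruction to gluing being a $K_1$-class at a point'' step would still need to be made precise. Replace the third paragraph with the stable-rank-one citation and the proof is complete.
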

\begin{proof}
It is well known that $C(X)\otimes B$ is of stable rank one and such an algebra satisfies the good index theory. Thus the conclusion follows from Proposition \ref{P:homotopy}. 
\end{proof}
\begin{cor}
 Suppose that $X$ does not contain any infinite point and  $C(X)\otimes B$ satisfies the good index theory. Any two homogeneous projections  $\mathbf{p}$ and $\mathbf{q}$ in the corona algebra of $C(X)\otimes B$ are homotopic.  
\end{cor}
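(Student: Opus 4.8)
The plan is to obtain this as a direct consequence of Proposition~\ref{P:homotopy}: I would show that when $X$ has no infinite endpoint the three conditions there can always be met, for arbitrary homogeneous $\mathbf{p}=(p_0,\dots,p_n)$, $\mathbf{q}=(q_0,\dots,q_n)$, no matter what the essential codimensions $k_i=[p_i(x_i):p_{i-1}(x_i)]$ and $l_i=[q_i(x_i):q_{i-1}(x_i)]$ are. Condition (3) is vacuous, since no $X_i$ has an infinite endpoint. For (1) and (2), note that once one has $\{t_i\}\subset K_0(B)$ with $t_i-t_{i-1}=k_i-l_i$, putting $s_i:=-t_i$ makes (2) hold with both $s_i+t_i$ and $s_{i-1}+t_{i-1}$ equal to $0$, and also gives $s_i-s_{i-1}=l_i-k_i$, which is exactly what the construction (as in the ``if'' direction of Proposition~\ref{P:von Neumann}) of the partial isometry realizing $\mathbf{1-p}\sim\mathbf{1-q}$ requires. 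So the whole matter reduces to solving the difference equation $t_i-t_{i-1}=k_i-l_i$ in $K_0(B)$.

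For $X=[0,1]$ this is immediate: the discontinuities sit at the interior points $x_1,\dots,x_n$ while there are $n+1$ intervals $X_0,\dots,X_n$, so $t_0$ is unconstrained; I would set $t_0=0$, $t_i:=t_{i-1}+(k_i-l_i)$ recursively, and $s_i:=-t_i$. Proposition~\ref{P:homotopy} then gives $\mathbf{p}\sim_h\mathbf{q}$, the good-index-theory hypothesis being precisely what pushes the implementing unitary into the identity component. For $X=\mathbb{T}$ there is an additional discontinuity at the base point $x_0=0=1$, so the difference equation becomes the cyclic system $t_i-t_{i-1}=k_i-l_i$ for $i=0,\dots,n$ modulo $n+1$, which is solvable exactly when the total defects agree, $\sum_{i=0}^n k_i=\sum_{i=0}^n l_i$. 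I would prove this by a clutching argument: trivialize each $p_i$ over the contractible arc $X_i$ by an isometry $u_i$ with $u_iu_i^*=p_i$, $u_i^*u_i=1$ (available from Theorem~\ref{T:triviality}), identify $k_i$ with the index of the Fredholm transition operator $u_{i-1}(x_i)^*u_i(x_i)$ via Lemma~\ref{L:BDF}, and then show that assembling these transitions around the circle gives back the trivialization one started with, so that the total index vanishes; equivalently, that $\sum_i k_i$ is an invariant of $[\mathbf{p}]_0$ (the $K_0(B)$-part of its image under the index map of $0\to C(\mathbb{T})\otimes B\to\mathcal{M}(C(\mathbb{T})\otimes B)\to\mathcal{Q}(C(\mathbb{T})\otimes B)\to 0$), hence the same for $\mathbf{p}$ and $\mathbf{q}$.

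The reduction above and the $[0,1]$ case are routine. The step I expect to be the real obstacle is the circle case: showing that the cyclic difference equation can be solved, i.e. that the total discontinuity $\sum_i k_i$ is forced and so agrees for any two homogeneous projections over $\mathbb{T}$. This needs a genuine analysis of the transition/transport data around the loop (or, equivalently, of the index map on $K_0$ of the corona algebra); everything else is bookkeeping on top of Proposition~\ref{P:homotopy}.
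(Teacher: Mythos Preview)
Your core argument is exactly the paper's: with no infinite endpoints condition~(3) of Proposition~\ref{P:homotopy} is vacuous, one picks $t_0\in K_0(B)$ freely, sets $t_i=t_{i-1}+(k_i-l_i)$ recursively and $s_i=-t_i$, and invokes Proposition~\ref{P:homotopy}. The paper does not separate $[0,1]$ from $\mathbb{T}$; it simply runs this recursion over $i=1,\dots,n$, which are the only indices for which $k_i,l_i$ are ever introduced, and stops there.

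The cyclic constraint you raise for $X=\mathbb{T}$ and the clutching argument you sketch to discharge it do not appear in the paper's proof. You are being more careful than the paper here: the discontinuity at the base point $x_0$ is acknowledged in the general setup, but a corresponding essential codimension $k_0$ is never defined or folded into the numbered conditions of Propositions~\ref{P:von Neumann} and~\ref{P:homotopy}, and the corollary's proof takes those conditions at face value. So your circle analysis is additional work the paper simply does not carry out. Note, however, that your sketch has its own loose end: even if $\sum_i k_i$ is determined by the image of $[\mathbf{p}]_0$ under the boundary map, nothing in the hypotheses forces $[\mathbf{p}]_0=[\mathbf{q}]_0$ for two \emph{arbitrary} homogeneous (halving) projections over $\mathbb{T}$, so the equality $\sum_i k_i=\sum_i l_i$ would still need an argument; and the alternative claim that the transitions around the loop compose to something of index zero does not follow just from ``coming back to the trivialization one started with,'' since the trivializations on adjacent arcs are unrelated.
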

\begin{proof}
The proof is similar to one given in Remark \ref{R:Cuntz}. Given  sequences $\{k_i\}_{i=1}^{n}$, $\{l_i\}_{i=1}^n$, choose any $t_0$ in $K_0(B)$, then we take $s_0=-t_0$, $t_1=k_1-l_1+t_0$. Inductively we can repeat this procedure so that we can arrange sequnecs $\{t_i\}_{i=0}^n$, $\{s_i\}_{i=0}^n$ such that 
\[k_i-l_i=t_i-t_{i-1}, \quad s_i+t_i=s_{i-1}+t_{i-1}=0.\] By Proposition \ref{P:homotopy}, the conclusion holds.  
\end{proof}
 
\end{document}